\newtheorem{theorem}{Theorem}[section]
\newtheorem{corollary}{Corollary}[section]
\newtheorem{lemma}{Lemma}[section]
\newtheorem{definition}{Definition}[section]
\newcommand{\bal}{\begin{align}}
\newcommand{\bbal}{\begin{align*}}
\newcommand{\beq}{\begin{equation}}
\newcommand{\eeq}{\end{equation}}
\newcommand{\bca}{\begin{cases}}
\newcommand{\eca}{\end{cases}}
\def\div{\mathord{{\rm div}}}
\newcommand{\fr}{\frac}
\newcommand{\cd}{\cdot}
\newcommand{\ep}{\epsilon}
\newcommand{\dd}{\mathrm{d}}
\newcommand{\R}{\mathbb{R}}
\newcommand{\f}{\left}
\newcommand{\g}{\right}
\begin{document}
\bibliographystyle{plain}
\title{A remark on the vanishing diffusivity limit of the Keller-Segel equations in Besov spaces}

\author{Yanghai Yu\footnote{E-mail: yuyanghai214@sina.com(Corresponding author); lf191110@163.com} and Fang Liu\\
\small   School of Mathematics and Statistics, Anhui Normal University, Wuhu 241002, China}

\date{\today}
\maketitle\noindent{\hrulefill}

{\bf Abstract:} It is shown in \cite[J. Differ. Equ., (2022)]{22jde} that given initial data $u_0\in B^{s}_{p,r}$ and for some $T>0$, the solutions of the parabolic-type Keller-Segel equations converge strongly in $L^\infty_TB^{s}_{p,r}$ to the hyperbolic Keller-Segel equations as the diffusivity parameter $\epsilon$ tends to zero. In this paper, we furthermore prove this solution maps do not converge uniformly with respect to the initial data $u_0$ as $\epsilon\to0$ in the same topology of Besov spaces.

{\bf Keywords:} Keller-Segel equations, Inviscid limit, Besov spaces

{\bf MSC (2010):} 35B30, 76B03
\vskip0mm\noindent{\hrulefill}

\section{Introduction}
For more than a century, biologists have observed that certain species of bacteria are preferred to move
toward higher concentrations of some chemicals, such as minerals, oxygen and organic nutrients. This biased movement, universally referred to as chemotaxis, has been fueling interest of both experimentalists and
theoreticians since it plays a vital role in wide-ranging biology phenomena \cite{hil}. Many diverse disciplines involve chemotaxis models whose
aspects include not only the mechanistic basis and biological foundations but also the modeling of specific systems and the mathematical analysis of the governing nonlinear equations. The most prominent model for this process was derived by Patlak, Keller and Segel \cite{Patlak01, Keller01,Keller02, Keller03},
which takes the form of
\begin{equation}\label{ame}
       \begin{cases}
        \partial_{t}u+ \div(D_{1}(u,S)\nabla u-\chi(u,S)\nabla S)=0,\\
        \tau S_{t}=D_{2} \Delta S+k(u,S),
       \end{cases}
     \end{equation}
here $u(x,t)$ represents the cell density at position $x \in \mathbb{R}^{d}$, time $t > 0$, and $S(x,t)$ is the concentration of a chemical signal. The motility $D_{1}(u, S)$ and the chemotactic sensitivity $\chi(u, S)$
rely on the cell density and on the signal concentration. The term $k(u, S)$ depicts production
and decay or consumption of the signal and $D_{2}$ is the diffusion constant for $S$. The parameter
$\tau$ illustrates that movement of the species and dynamics of the signal have different characteristic time scales. The Keller-Segel model has been applied to many different problems, ranging from bacteria
chemotaxis to cancer growth or the immune response.

Inspired by the convection equation with a small diffusion term as higher order correction
from a kinetic model for chemotaxis, by taking $D_{1}(u,S)=-\epsilon,\, \chi(u,S)=1-u,\, \tau=0,\, D_2=1$ and $k(u,S)=S-u$ in \eqref{ame}, Dolak-Schmeiser \cite{Dolak02} proposed the following parabolic-type Keller-Segel equations with small diffusivity
\begin{equation}\label{wme}
       \begin{cases}
        \partial_{t}u=- \div(u(1-u)\nabla S-\epsilon\nabla u), \\
       -\Delta S=u - S.
       \end{cases}
     \end{equation}
In this paper, we consider the Cauchy problem for the parabolic-type Keller-Segel equation \eqref{wme}
\begin{align}\label{1}\tag{PKS}
       \begin{cases}
        \partial_{t}u-\epsilon\Delta u=-\div\f(u(1-u)\nabla S\g), &\text{in}\quad \mathbb{R}^{+}\times\mathbb{R}^{d}, \\
        S=(1-\Delta)^{-1}u,          &\text{in}\quad    \mathbb{R}^{+}\times\mathbb{R}^{d}, \\
        u(x,0)=u_{0}(x)               , &\text{in}\quad \mathbb{R}^{d}.
       \end{cases}
     \end{align}
When $\epsilon=0$, \eqref{1} reduces to the hyperbolic Keller-Segel equation
     \begin{align}\label{2}\tag{HKS}
       \begin{cases}
        \partial_{t}u=-\div\f(u(1-u)\nabla S\g), &\text{in}\quad \mathbb{R}^{+}\times\mathbb{R}^{d}, \\
        S=(1-\Delta)^{-1}u,          &\text{in}\quad    \mathbb{R}^{+}\times\mathbb{R}^{d}, \\
        u(x,0)=u_{0}(x)               , &\text{in}\quad \mathbb{R}^{d}.
       \end{cases}
     \end{align}
Dolak-Schmeiser \cite{Dolak01} firstly
established the existence and unique of global smooth solution to one dimensional version of \eqref{wme} with suitable
conditions on the initial data. On a time scale characteristic for the convective effects, they also
proved that the corresponding sequence of solutions $u^\epsilon$ converges to the weak entropy solution
$u$ to \eqref{ame} as $\epsilon\to0$. Burger-Difrancesco-Dolak\cite{Burger01} obtained the unique local-in-time solution to \eqref{wme} with the initial data belonging to $L^1(\R^d)\cap L^\infty(\R^d)$. Perthame-Dalibard \cite{Perthame01} proved the existence of an entropy solution to \eqref{ame} by passing to the limit in a sequence of solutions
to the parabolic approximation. Burger-Dolak-Schmeiser \cite{Burger02} studied the asymptotic behavior of solutions of the chemotaxis model \eqref{wme} in multi-dimensional spaces.  Some other results related to \eqref{wme} can be found in \cite{Tello01,Winkler01,Winkler02}.

Zhou-Zhang-Mu \cite{Zhou01} obtained the existence and uniqueness of solution of \eqref{2} in ${{B}}_{p,r}^{s}(\mathbb{R}^{d})$ wtih $1\leq p,r\leq\infty$ and $s>1+{d}/{p}$. Zhang-Mu-Zhou \cite{22jde} proved that \eqref{2} is well-posed in ${{B}}_{p,1}^{1+{d}/{p}}(\mathbb{R}^{d})$ with $1\leq p<\infty$ and is ill-posed in ${{B}}_{2,\infty}^{s}(\mathbb{R})$ with $s>\frac{3}{2}$. Fei-Yu-Fei \cite{Fyf} generalized the above result by proving that \eqref{2} is ill-posed in $B^s_{p,\infty}(\R^d)$ with $d\geq1,1\leq p\leq\infty$ and $s>1+{d}/{p}$.
Formally, as $\epsilon\to0$, the solution of \eqref{1} converges to the solution of \eqref{2}, which has been considered in \cite{Dolak02} for one dimension case. Based on the well-posedness results in \cite{Zhou01,22jde}, Zhang-Mu-Zhou \cite{22jde} proved that given initial data $u_0\in B^{s}_{p,r}$ and for some $T>0$, the solutions of the parabolic-type Keller-Segel equations \eqref{1} converge strongly in $L^\infty_TB^{s}_{p,r}$ to the hyperbolic Keller-Segel equations \eqref{2} as the diffusivity parameter $\epsilon$ tends to zero. In this paper, we furthermore prove this solution maps do not converge uniformly with respect to the initial data $u_0$ as $\epsilon\to0$ in the same topology of Besov spaces.

Before stating our main result, we denote any bounded subset $U_R$ in $B^s_{p,r}(\mathbb{R}^d)$ by
$$U_R:=\left\{\phi\in B^s_{p,r}(\mathbb{R}^d): \|\phi\|_{B^s_{p,r}(\mathbb{R}^d)}\leq R\right\}\quad \text{for}\quad R>0.$$
Let
 \begin{align*}
&u^\epsilon(t,u_0)=\text{\rm the solution map of}\, \eqref{1} \text{\rm with initial data}\, u_0,\\
&\bar{u}(t,u_0)=\text{\rm the solution map of}\, \eqref{2} \text{\rm with initial data}\, u_0.
 \end{align*}
 The main result of the paper is the following theorem:
\begin{theorem}\label{th2} Let $d\geq 1$. Assume that $(s,p,r)$ satisfies
\begin{align}\label{cond}
s>\frac{d}{p}+1,\; (p,r)\in [1,\infty]\times(1,\infty) \quad   \text{or}    \quad s=\frac{d}{p}+1,\; (p,r)\in [1,\infty)\times\{1\}.
\end{align}
 Then there exists a sequence initial data $u_0\in U_R$ such that for a short time $T_0$
$$
\liminf_{\epsilon_n\to 0}\left\|u^{\epsilon_n}(t,u_0)-\bar{u}(t,u_0)\right\|_{L^\infty_{T_0}B^s_{p,r}}\geq c_0,
$$
with some positive constant $c_0$.
\end{theorem}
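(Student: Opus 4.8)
\emph{Strategy.} The obstruction to uniform convergence comes entirely from the parabolic smoothing: the heat semigroup $e^{\epsilon t\Delta}$ damps a mode of frequency $2^n$ by the factor $e^{-\epsilon t2^{2n}}$, and this factor is of order one precisely when $\epsilon\simeq 2^{-2n}$. My plan is therefore to feed \eqref{1} and \eqref{2} a common family of high–frequency, low–amplitude initial data concentrated at frequency $2^n$, to take $\epsilon_n=2^{-2n}$, and to show that while the hyperbolic solution essentially freezes (its nonlinear drift being quadratically small), the parabolic solution loses an order-one fraction of this mode. All the analysis is thereby reduced to a linear computation with the heat semigroup, the nonlinear terms being treated as small remainders.

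\emph{Construction and uniform bounds.} Fix $\phi\in\mathcal{S}(\R^d)$ whose Fourier transform is supported in a fixed small ball about the origin, and set
\[
u_{0,n}(x)=\delta\,2^{-ns}\,\phi(x)\,\cos\left(\tfrac{17}{12}2^{n}x_1\right),\qquad \epsilon_n=2^{-2n},
\]
where $\delta=\delta(\phi,R)>0$ is chosen so small that $\|u_{0,n}\|_{B^s_{p,r}}\le R$. The spectrum of $u_{0,n}$ lies in the annulus $\{|\xi|\sim 2^n\}$, so $\Delta_n u_{0,n}=u_{0,n}$ and $\|u_{0,n}\|_{B^s_{p,r}}=2^{ns}\|\Delta_n u_{0,n}\|_{L^p}\simeq \delta\,\|\phi\|_{L^p}$ is bounded above by $R$ and below by a positive constant $c_1$, both uniformly in $n$. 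Since $u_{0,n}\in U_R$ for every $n$, the well-posedness theory of \cite{Zhou01,22jde} under the hypotheses \eqref{cond} provides a time $T_0=T_0(R)>0$, independent of $n$ and of $\epsilon_n$, on which both $u^{\epsilon_n}(\cdot,u_{0,n})$ and $\bar u(\cdot,u_{0,n})$ exist and satisfy $\|u^{\epsilon_n}\|_{L^\infty_{T_0}B^s_{p,r}}+\|\bar u\|_{L^\infty_{T_0}B^s_{p,r}}\le C(R)$.

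\emph{Decomposition and the main term.} Writing $N(u)=-\div(u(1-u)\nabla S)$ and applying Duhamel's formula to \eqref{1} and \eqref{2}, the difference splits as
\[
u^{\epsilon_n}(t)-\bar u(t)=\underbrace{\left(e^{\epsilon_n t\Delta}-\mathrm{Id}\right)u_{0,n}}_{=:L_n(t)}+\underbrace{\int_0^t e^{\epsilon_n(t-\tau)\Delta}N(u^{\epsilon_n})\,\dd\tau}_{=:I_n(t)}-\underbrace{\int_0^t N(\bar u)\,\dd\tau}_{=:II_n(t)}.
\]
For the main term $L_n$ the spectral localization gives $e^{-\epsilon_n t|\xi|^2}=e^{-(17/12)^2t}\left(1+O(2^{-n})\right)$ uniformly on $\mathrm{supp}\,\widehat{u_{0,n}}$, whence $L_n(t)=\left(e^{-(17/12)^2t}-1\right)u_{0,n}+o(1)$ in $B^s_{p,r}$. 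Since $\sup_{t\in[0,T_0]}|e^{-(17/12)^2t}-1|=|e^{-(17/12)^2T_0}-1|$, we obtain
\[
\liminf_{n\to\infty}\|L_n\|_{L^\infty_{T_0}B^s_{p,r}}\ge \left|e^{-(17/12)^2T_0}-1\right|\,c_1=:2c_0>0 .
\]

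\emph{Remainders, the obstacle, and the conclusion.} It remains to prove $\|I_n\|_{L^\infty_{T_0}B^s_{p,r}}+\|II_n\|_{L^\infty_{T_0}B^s_{p,r}}\to0$; granting this, the triangle inequality gives $\liminf_{n}\|u^{\epsilon_n}-\bar u\|_{L^\infty_{T_0}B^s_{p,r}}\ge 2c_0-0>c_0$, which is the claim. The decisive point, and the main technical obstacle, is that the transport nonlinearity loses one derivative through the term $\nabla S\cdot\nabla u$, so the crude bound $\|N(u)\|_{B^s_{p,r}}\lesssim\|u\|_{B^s_{p,r}}^2$ fails. This loss is circumvented by the special structure of the data: the chemotactic drift $\nabla S=\nabla(1-\Delta)^{-1}u$ and the compression $\nabla^2 S$ are controlled on the range of frequency-$2^n$ functions, so that $\|\nabla^2 S\|_{L^\infty}\lesssim\|u\|_{L^\infty}\simeq \delta 2^{-ns}\to0$; moreover, because $u$ and $\nabla S$ are both concentrated near $2^n$, the derivative-losing low–high paraproduct $T_{\nabla S}\nabla u$ is suppressed, while the surviving high–high interaction outputs only near frequencies $0$ and $2^{n+1}$. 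For solutions concentrated near $2^n$ this yields the quadratic, frequency-gaining bound $\|N(u)\|_{B^s_{p,r}}\lesssim \delta^2 2^{-ns}$; a continuity argument that propagates both the smallness of $\|u(t)\|_{L^\infty}$ and the spectral concentration on $[0,T_0]$ then keeps $\|N(u^{\epsilon_n}(\tau))\|_{B^s_{p,r}}$ and $\|N(\bar u(\tau))\|_{B^s_{p,r}}$ at size $\lesssim\delta^2 2^{-ns}$, so that $\|I_n\|_{L^\infty_{T_0}B^s_{p,r}}+\|II_n\|_{L^\infty_{T_0}B^s_{p,r}}\lesssim T_0\,\delta^2 2^{-ns}\to0$. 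Propagating the spectral concentration through the nonlinear flow is exactly the delicate step, and I expect it to carry the bulk of the work; an economical alternative that avoids the full Besov control of $N$ is to bound the difference from below by a single Littlewood–Paley block, $\|u^{\epsilon_n}(t)-\bar u(t)\|_{B^s_{p,r}}\gtrsim 2^{ns}\|\Delta_n(u^{\epsilon_n}(t)-\bar u(t))\|_{L^p}$, and to note that the frequency-$2^n$ component of both nonlinear integrals is of still higher order, since the quadratic interaction of modes near $2^n$ produces no output near $2^n$.
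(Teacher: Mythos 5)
Your construction is exactly the paper's: the same frequency-$\tfrac{17}{12}2^n$, amplitude-$2^{-ns}$ data, the same coupling $\epsilon_n=2^{-2n}$, and your main term $L_n$ is the paper's $\mathrm{I}_1$ (Lemmas \ref{yl1} and \ref{pr1}, which give $\|L_n(t)\|_{B^s_{p,r}}\approx t$; your constant-in-$t$ variant is equivalent for small $t$). The gap is in the remainder. You claim that the \emph{full} nonlinear Duhamel contributions $I_n,II_n$ tend to $0$ in $L^\infty_{T_0}B^s_{p,r}$ as $n\to\infty$ for a \emph{fixed} $T_0$, justified by the assertion that the solution stays spectrally concentrated near $2^n$ with amplitude $\lesssim \delta 2^{-ns}$, so that $N(u)$ stays of size $\lesssim\delta^2 2^{-ns}$. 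That is true for the initial datum, but for the solution at positive times it is precisely the statement that needs proof. Writing $u^{\epsilon_n}(\tau)=e^{\epsilon_n\tau\Delta}u_{0,n}+v(\tau)$, the correction $v$ is not frequency-localized, and the estimates available from the well-posedness theory (the analogues of \eqref{m1}--\eqref{m2} and \eqref{L1}--\eqref{L3}) give only $\|v(\tau)\|_{B^{s-1}_{p,r}}\lesssim\tau2^{-n}$, $\|v(\tau)\|_{B^{s}_{p,r}}\lesssim\tau$, $\|v(\tau)\|_{B^{s+1}_{p,r}}\lesssim\tau2^{n}$. Feeding these back into the nonlinearity produces terms such as $\div\left(u_1^{\epsilon_n}\nabla(1-\Delta)^{-1}v\right)$, which these bounds control only by $O(\tau)$ in $B^s_{p,r}$ \emph{uniformly in} $n$ --- not by $o_n(1)$. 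So what follows from the known estimates is $\|I_n\|+\|II_n\|\lesssim t\,o_n(1)+t^2$, and your inequality $\liminf_n\|u^{\epsilon_n}-\bar u\|\geq 2c_0-0$ does not follow at fixed $T_0$. To substantiate your claim you would need a genuine multi-norm bootstrap propagating an amplitude-aware ansatz (for instance $\|v(\tau)\|_{B^{\sigma}_{p,r}}\lesssim\tau\,2^{n(\sigma-2s)}$ for $\sigma\in[s-2,s+1]$, with the top regularity controlled by \eqref{m2}); this appears feasible, but it is the entire technical content of the theorem, and your one-sentence ``continuity argument'' does not supply it. The same objection defeats your ``economical alternative'': $v$ acquires a low-frequency component instantly (the high-high interaction of $u_{0,n}$ with itself lands near frequency $0$), and then low-high interactions of that component with $u_1^{\epsilon_n}$ output \emph{exactly} at frequency $2^n$, so ``no output near $2^n$'' is false for the solution without the same quantitative spectral control.

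For comparison, the paper closes the argument without any spectral propagation: it inserts the second Picard iterate, $u^{\epsilon}=u_1^{\epsilon}+u_2^{\epsilon}+u_3^{\epsilon}$ and $\bar u=\bar u_1+\bar u_2+\bar u_3$, proves $\|(u_2^{\epsilon},\bar u_2)\|_{B^s_{p,r}}\lesssim 2^{-n(s-1)}=o_n(1)$ (Lemma \ref{pr2}, which uses only the amplitude smallness of the datum), and for the genuinely nonlinear remainders proves only $\|(u_3^{\epsilon},\bar u_3)\|_{B^s_{p,r}}\lesssim t^2$ (Lemma \ref{pr3}) --- a bound that is \emph{not} small in $n$. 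The theorem then follows by taking $t$ small, so that the linear term $\approx t$ dominates $Ct^2+o_n(1)$; this is exactly why the constant in Theorem \ref{th2} is of the form $c_0 t$. If you want your stronger conclusion (an order-one defect at a fixed time $T_0$), you must carry out the bootstrap described above; the economical fix is to restructure your remainder estimate along the paper's second-iterate splitting and win by choosing $t$ small.
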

As a by-product, we have
\begin{corollary}\label{co1} Let $d\geq 1$. Assume that $(s,p,r)$ satisfies \eqref{cond}
For any $u_0\in U_R$, let $\mathbf{S}_{t}^{\epsilon}(u_0)$ be the solutions of \eqref{1} with the initial data $u_0$. Then a family of solutions $\f\{\mathbf{S}_{t}^{\epsilon}(u_0)\g\}_{\epsilon>0}$ to \eqref{1}
\begin{equation*}
\mathbf{S}_t^\epsilon:\begin{cases}
U_R \rightarrow \mathcal{C}([0, T] ; B_{p, r}^{s}),\\
u_0\mapsto \mathbf{S}_t^\epsilon(u_0),
\end{cases}
\end{equation*}
does not converge uniformly with respect to initial data  as $\epsilon\to0$ to the solutions to \eqref{2} in $B^{s}_{p,r}$.
\end{corollary}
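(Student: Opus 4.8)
The plan is to exploit the fact that the diffusion semigroup $e^{\ep t\De}$ damps a frequency-$2^n$ mode by the factor $e^{-\ep t 2^{2n}}$, whereas the hyperbolic flow \eqref{2} has no linear part and so leaves such a mode essentially unchanged over a short time; by coupling the frequency $2^n$ to the diffusivity through $\ep_n=2^{-2n}$, the damping becomes of order one and cannot be uniform. Concretely, I would fix a scalar Schwartz function $\phi$ with $\widehat{\phi}$ supported in a small fixed ball around the origin and set
\beq
u_{0,n}(x)=2^{-ns}\phi(x)\cos(2^n x_1),\qquad \ep_n=2^{-2n},
\eeq
so that $u_{0,n}$ is spectrally localized in the dyadic block $j=n$ and $\|u_{0,n}\|_{B^s_{p,r}}\approx\|\phi\|_{L^p}$, which after normalizing $\phi$ places the whole sequence inside $U_R$. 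The amplitude $2^{-ns}$ is not a free choice but is forced by the constraint $u_{0,n}\in U_R$ at frequency $2^n$; crucially it makes $\|u_{0,n}\|_{L^\infty}\to0$ and renders every nonlinear contribution quadratically small, which is exactly what lets the linear heat-damping dominate. Note that for each \emph{fixed} $n$ one still has $e^{-\ep t 2^{2n}}\to1$ as $\ep\to0$, consistent with the convergence of \cite{22jde}; the non-uniformity arises only on sending $n\to\infty$ and $\ep_n\to0$ simultaneously.

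First I would split each solution into its linear part and a nonlinear remainder by Duhamel's formula, writing $u^{\ep_n}(t)=e^{\ep_n t\De}u_{0,n}+v^{\ep_n}(t)$ with $v^{\ep_n}(t)=\int_0^t e^{\ep_n(t-\tau)\De}N(u^{\ep_n})\,\dd\tau$, and $\bar u(t)=u_{0,n}+\bar v(t)$ with $\bar v(t)=\int_0^t N(\bar u)\,\dd\tau$, where $N(u)=-\div\f(u(1-u)\na S\g)$ and $S=(1-\De)^{-1}u$. The triangle inequality then gives
\beq
\f\|u^{\ep_n}(t)-\bar u(t)\g\|_{B^s_{p,r}}\geq\f\|(e^{\ep_n t\De}-\mathrm{Id})u_{0,n}\g\|_{B^s_{p,r}}-\|v^{\ep_n}(t)\|_{B^s_{p,r}}-\|\bar v(t)\|_{B^s_{p,r}},
\eeq
so the statement reduces to a lower bound on the linear term together with an $o(1)$ bound on the two remainders. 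For the linear term, the spectral localization of $u_{0,n}$ near $|\xi|=2^n+O(1)$ and the choice $\ep_n=2^{-2n}$ give $\ep_n t|\xi|^2=t\bigl(1+O(2^{-n})\bigr)$ on the support of $\widehat{u_{0,n}}$, whence
\beq
\f\|(e^{\ep_n t\De}-\mathrm{Id})u_{0,n}\g\|_{B^s_{p,r}}\approx(1-e^{-t})\,\|u_{0,n}\|_{B^s_{p,r}};
\eeq
evaluating at $t=T_0$ produces a constant of size $(1-e^{-T_0})R>0$ independent of $n$. Making the ``$\approx$'' rigorous is routine: the multiplier $e^{-\ep_n t|\xi|^2}$ is smooth on the relevant annulus and its deviation from the constant $e^{-t}$ is $O(2^{-n})$ by the mean value theorem, so it perturbs the single-block norm only by a vanishing factor.

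The hard part will be the remainder estimate $\|v^{\ep_n}\|_{L^\infty_{T_0}B^s_{p,r}}+\|\bar v\|_{L^\infty_{T_0}B^s_{p,r}}\to0$, where the difficulty is the genuinely quasilinear, derivative-losing structure of $N$: the transport part $(1-2u)\na S\cd\na u$ carries a full derivative on $u$, so one cannot simply bound $\|N(u)\|_{B^s_{p,r}}$ via $\|u(1-u)\na S\|_{B^{s+1}_{p,r}}$ without losing regularity. I would instead run a commutator/energy estimate in $B^s_{p,r}$ for $\bar v$ (and the analogous one for $v^{\ep_n}$, where the extra factor $e^{\ep_n(t-\tau)\De}$ is harmless since the heat semigroup is uniformly bounded on Besov spaces), closing it with the a priori bound $\|\bar u\|_{L^\infty_{T_0}B^s_{p,r}}\les R$ from the well-posedness theory of \cite{Zhou01,22jde}. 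The smallness then comes from two compounding sources: the amplitude $\|\bar u\|_{L^\infty}\les2^{-ns}$ inherited by persistence, and the extra regularity/decay in $\na S=\na(1-\De)^{-1}u$, which together force the drift $V=(1-2\bar u)\na S$ to satisfy $\|V\|_{B^s_{p,r}}\les2^{-n}R$ and the forcing to satisfy $\|N(u_{0,n})\|_{B^s_{p,r}}\les2^{-ns}$; a Gr\"onwall argument then yields $\|\bar v\|_{L^\infty_{T_0}B^s_{p,r}}\les2^{-ns}\to0$, and likewise for $v^{\ep_n}$. Inserting these three bounds and passing to $\liminf_{n\to\infty}$ (equivalently $\liminf_{\ep_n\to0}$) gives the asserted lower bound with any $c_0<(1-e^{-T_0})R$. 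Finally, Corollary \ref{co1} is immediate, since the displayed inequality is precisely the negation of uniform convergence of the family $\mathbf{S}_t^\ep$ over the bounded set $U_R$.
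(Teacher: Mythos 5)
Your overall strategy --- the same amplitude-$2^{-ns}$, frequency-$2^n$ data with $\ep_n=2^{-2n}$, isolating the $O(1)$ heat damping of that single mode, and showing the nonlinear contributions are negligible, then reading off the Corollary as the negation of uniform convergence --- is exactly the paper's strategy, and your treatment of the linear term is fine (the paper gets the equivalent bound $\approx t$ by writing $(e^{t\ep\De}-\mathrm{Id})u^n_0=\int_0^t e^{\tau\ep\De}\ep\De u^n_0\,\dd\tau$ instead of freezing the multiplier). The genuine gap is in your remainder estimate. You lump all nonlinear effects into a single remainder $\bar v=\bar u-u_{0,n}$ and claim $\|\bar v\|_{L^\infty_{T_0}B^s_{p,r}}\les 2^{-ns}\to0$ on a \emph{fixed} interval $[0,T_0]$, the crucial input being ``$\|\bar u\|_{L^\infty}\les 2^{-ns}$ inherited by persistence''. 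But the persistence theory you invoke (the Besov well-posedness of \cite{Zhou01,22jde}, i.e.\ the analogues of \eqref{m1}--\eqref{m2}) propagates Besov norms only: it yields $\|\bar u(t)\|_{L^\infty}\les\|\bar u(t)\|_{B^{s-1}_{p,r}}\les\|u^n_0\|_{B^{s-1}_{p,r}}\approx 2^{-n}$, which for $s>1$ is far larger than $2^{-ns}$. With $2^{-n}$ in place of $2^{-ns}$, the dangerous product term in $\|N(\bar u)\|_{B^s_{p,r}}$, namely $\|\bar u(1-\bar u)\|_{B^{s+1}_{p,r}}\|\na\bar S\|_{L^\infty}\les 2^{n}\cdot 2^{-n}=O(1)$, is not small, and your Gr\"onwall argument returns only $\|\bar v(t)\|_{B^s_{p,r}}\les t$ --- exactly the size of the linear term you are trying to isolate, so the lower bound collapses. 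Propagating the $L^\infty$ bound $2^{-ns}$ along the flow is in fact true (maximum principle for \eqref{1}, characteristics for \eqref{2}, using $\pa_t u+(1-2u)\na S\cd\na u=u(1-u)(u-S)$), but that is an additional, nontrivial argument which your proposal neither states nor proves; as written, the step fails. (A smaller inaccuracy of the same origin: $\|N(u_{0,n})\|_{B^s_{p,r}}$ is not $\les 2^{-ns}$ but $\approx 2^{-n(s-1)}$, the dominant term being $\|u_{0,n}(1-u_{0,n})\|_{B^{s+1}_{p,r}}\|\na S_{0,n}\|_{L^\infty}\sim 2^{n}\cdot 2^{-ns}$.)

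The paper circumvents precisely this difficulty with a three-part decomposition $u^{\ep}=u^{\ep}_1+u^{\ep}_2+u^{\ep}_3$ (and likewise for $\bar u$): the second iterate $u^{\ep}_2$ is driven by $u^{\ep}_1$ \emph{alone}, which genuinely has $L^\infty$ size $2^{-ns}$, so $\|u^{\ep}_2\|_{B^s_{p,r}}\les t\,2^{-n(s-1)}\to0$; the third part $u^{\ep}_3$ is \emph{not} shown to vanish in $n$ at all --- it is only shown to be $O(t^2)$ uniformly in $n$ --- and is then absorbed by taking $T_0$ small, so that $C_1t-Ct^2-C2^{-n(s-1)}\gtrsim t$ for large $n$. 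Two consequences for your write-up: first, without the $L^\infty$ maximum-principle input, your remainder cannot be $o_n(1)$ on a fixed time interval, since it contains an honest $O(t^2)$ piece; second, the constant you announce, any $c_0<(1-e^{-T_0})R$, is not what this argument produces --- the constant degenerates like $c_0\sim T_0$ because of the $-CT_0^2$ loss, and is proportional to the (order-one) norm of the chosen data rather than to $R$. A cosmetic point: with carrier frequency exactly $2^n$ the spectrum of $u_{0,n}$ straddles the two dyadic blocks $j=n-1$ and $j=n$; the paper's choice $\frac{17}{12}2^n$ places it where $\varphi(2^{-n}\cdot)\equiv1$, so exactly one block survives and the norm identities are immediate.
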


\section{Littlewood-Paley Analysis}\label{sec2}
\setcounter{equation}{0}
We will use the following notations throughout this paper.
 For $X$ a Banach space and $I\subset\R$, we denote by $\mathcal{C}(I;X)$ the set of continuous functions on $I$ with values in $X$. Sometimes we will use $\|(f,g)\|_{X}=\|f\|_{X}+\|g\|_{X}$ and $L_T^pX=L^p(0,T;X)$.
The symbol $\mathrm{A}\approx \mathrm{B}$ means that $C_1\mathrm{B}\leq\mathrm{A}\leq C_2\mathrm{B}$ for some uniform positive ``harmless" constants $C_1$ and $C_2$.
 Let us recall that for all $u\in \mathcal{S}'$, the Fourier transform $\widehat{u}$ is defined by
$$
\mathcal{F}u(\xi)=\widehat{u}(\xi)=\int_{\R^d}e^{-\mathrm{i}x\cd \xi}u(x)\dd x \quad\text{for any}\; \xi\in\R^d.
$$
Next, we will review the definition of Littlewood-Paley decomposition and nonhomogeneous Besov space, and then list some useful properties which will be frequently used in the sequel. For more details, the readers can refer to \cite{B}.

Let $\varphi\in C_c^{\infty}(\mathbb{R}^d)$ and $\chi\in C_c^{\infty}(\mathbb{R}^d)$ be  radial positive functions such that
    \begin{align*}
    &\mathrm{supp}\ \varphi\subset\f\{\xi\in \mathbb{R}^d:\frac34\leq|\xi|\leq\frac83\g\},  \quad\mathrm{supp}\ \chi\subset\f\{\xi\in \mathbb{R}^d:|\xi|\leq\frac43\g\},\quad
    \\& \chi(\xi)+ \sum_{j\geq0}\varphi(2^{-j}\xi)=1\ \text{for any}\ \xi\in\mathbb{R}^d,
     \\ &\mathrm{supp}\ \varphi(2^{-i}\cdot)\cap\mathrm{supp}\ \varphi(2^{-j}\cdot)=\varnothing,\quad\text{if}\quad |i-j|\geq2,
     \\&\mathrm{supp}\ \varphi(2^{-j}\cdot)\cap\mathrm{supp}\ \chi(x)=\varnothing,\quad\text{if}\quad j\geq1, \\
     &\varphi(\xi)\equiv 1\quad \text{for}\quad\frac43\leq |\xi|\leq \frac32.
      \end{align*}
We can  define the nonhomogeneous localization operators as follows.
\begin{numcases}{\Delta_ju=}
0, &if $j\leq-2$,\nonumber\\
\chi(D)u, &if $j=-1$,\nonumber\\
\varphi(2^{-j}D)u, &if $j\geq0$,\nonumber
\end{numcases}
where the pseudo-differential operator $f(D):u\to\mathcal{F}^{-1}(f \mathcal{F}u)$.

Let us now define the Besov spaces as follows.
\begin{definition}[\cite{B}]
Let $s\in\mathbb{R}$ and $(p,r)\in[1, \infty]^2$. The nonhomogeneous Besov space $B^{s}_{p,r}(\R^d)$ is defined by
\begin{align*}
B^{s}_{p,r}(\R^d):=\f\{f\in \mathcal{S}'(\R^d):\;\|f\|_{B^{s}_{p,r}(\R^d)}<\infty\g\},
\end{align*}
where
\begin{numcases}{\|f\|_{B^{s}_{p,r}(\R^d)}=}
\left(\sum_{j\geq-1}2^{sjr}\|\Delta_jf\|^r_{L^p(\R^d)}\right)^{\fr1r}, &if $1\leq r<\infty$,\nonumber\\
\sup_{j\geq-1}2^{sj}\|\Delta_jf\|_{L^p(\R^d)}, &if $r=\infty$.\nonumber
\end{numcases}
\end{definition}
The following Bernstein's inequalities will be used in the sequel.
\begin{lemma}[\cite{B}] \label{bern} Let $\mathcal{B}$ be a ball and $\mathcal{C}$ be an annulus. There exists a constant $C>0$ such that for all $k\in \mathbb{N}\cup \{0\}$, any $\lambda\in \R^+$ and any function $f\in L^p$ with $1\leq p \leq \infty$, we have
\begin{align*}
&{\rm{supp}}\,\widehat{f}\subset \lambda \mathcal{C}\;\Rightarrow\; C^{-k-1}\lambda^k\|f\|_{L^p} \leq \|\nabla^kf\|_{L^p} \leq C^{k+1}\lambda^k\|f\|_{L^p}.
\end{align*}
\end{lemma}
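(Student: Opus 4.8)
The plan is to prove the displayed two-sided estimate by the classical scaling-plus-Young argument: first normalize the frequency scale to $\lambda=1$, then realize each $k$-th order derivative (and its inverse) as convolution against the inverse Fourier transform of an explicit multiplier, and finally control the $L^1$ norms of those kernels by a constant that grows only geometrically in $k$. Throughout I interpret $\|\nabla^k f\|_{L^p}$ as comparable to $\max_{|\alpha|=k}\|\partial^\alpha f\|_{L^p}$; the two differ by the number of multi-indices of length $k$, which is $O\!\big((1+k)^{d-1}\big)$ and hence absorbable into $C^{k+1}$.

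First I would dilate. Given $f$ with $\operatorname{supp}\widehat f\subset\lambda\mathcal{C}$, set $h(x)=f(x/\lambda)$, so that $\operatorname{supp}\widehat h\subset\mathcal{C}$. A change of variables gives, for every multi-index $\alpha$ with $|\alpha|=k$, the scale relation $\|\partial^\alpha f\|_{L^p}/\|f\|_{L^p}=\lambda^{k}\,\|\partial^\alpha h\|_{L^p}/\|h\|_{L^p}$. Hence it suffices to prove, for functions with Fourier support in the fixed annulus $\mathcal{C}$, that $C^{-k-1}\|h\|_{L^p}\le\max_{|\alpha|=k}\|\partial^\alpha h\|_{L^p}\le C^{k+1}\|h\|_{L^p}$, after which the $\lambda^k$ factor is recovered automatically. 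Next I fix $\phi\in C_c^\infty(\R^d\setminus\{0\})$, supported in a slightly larger annulus and with $\phi\equiv1$ on $\mathcal{C}$, so that $\widehat h=\phi\,\widehat h$. For the upper bound, $\widehat{\partial^\alpha h}=(i\xi)^\alpha\phi\,\widehat h$ gives $\partial^\alpha h=\mathcal{F}^{-1}\!\big((i\xi)^\alpha\phi\big)*h$, and Young's convolution inequality reduces matters to $\|\mathcal{F}^{-1}((i\cdot)^\alpha\phi)\|_{L^1}\le C^{k+1}$. For the lower bound I would use that $\mathcal{C}$ is an annulus, so $|\xi|$ is bounded below there, together with the multinomial identity $|\xi|^{2k}=\sum_{|\alpha|=k}\frac{k!}{\alpha!}\xi^{2\alpha}$ to write
\[
\widehat h=\sum_{|\alpha|=k}(-i)^{k}\frac{k!}{\alpha!}\,\frac{\xi^\alpha\phi(\xi)}{|\xi|^{2k}}\,\widehat{\partial^\alpha h};
\]
Young again reduces the claim to bounding $\sum_{|\alpha|=k}\frac{k!}{\alpha!}\big\|\mathcal{F}^{-1}\!\big(\xi^\alpha\phi/|\xi|^{2k}\big)\big\|_{L^1}$ by $C^{k+1}$.

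The crux is the uniform geometric control of these $L^1$ norms. I would use the Cauchy–Schwarz/Plancherel bound $\|\mathcal{F}^{-1}g\|_{L^1}\lesssim\sum_{|\beta|\le M}\|\partial^\beta g\|_{L^2}$, with $M=M(d)$ a fixed integer coming from the fact that $(1+|x|^2)^{-d/2}\in L^2(\R^d)$ for $d\ge1$. Because $\phi$ is supported in a fixed annulus, the monomial $\xi^\alpha$ and the negative power $|\xi|^{-2k}$ are bounded there by $R^{k}$ and $c^{-2k}$ respectively, and Leibniz differentiation of order $\le M$ of $(i\xi)^\alpha\phi$ or of $\xi^\alpha\phi/|\xi|^{2k}$ produces only polynomially-many-in-$k$ factors, each $\lesssim k$. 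Thus every such $L^2$ norm is $\lesssim R^{k}\,c^{-2k}\,k^{M}\le C^{k+1}$, since $R^{k}c^{-2k}$ is geometric and $k^M$ is dominated by any fixed geometric rate. The combinatorial prefactors are harmless: $\sum_{|\alpha|=k}\frac{k!}{\alpha!}=d^{k}$ and the number of $\alpha$ with $|\alpha|=k$ is $O\!\big((1+k)^{d-1}\big)$, both absorbed into $C^{k+1}$.

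The main obstacle is precisely this last step — obtaining a \emph{geometric} constant $C^{k+1}$ rather than a factorial one. A naive estimate of $\|\mathcal{F}^{-1}(\xi^\alpha\phi)\|_{L^1}$ through repeated integration by parts, differentiating $\phi$ up to order $k$, would generate $k!$-type growth from $\partial^\gamma(\xi^\alpha)$; the whole point of differentiating $g$ only a fixed number $M=M(d)$ of times, and paying for the monomial $\xi^\alpha$ merely through its supremum on the compact annulus, is to convert that would-be factorial into the benign product $R^{k}k^{M}$. Everything else — the dilation, Young's inequality, and Plancherel — is routine. I would close by collecting constants, passing from $\max_{|\alpha|=k}\|\partial^\alpha h\|_{L^p}$ back to $\|\nabla^k h\|_{L^p}$, and undoing the dilation to restore the factor $\lambda^{k}$, noting that only the lower estimate genuinely exploits that $\mathcal{C}$ is an annulus bounded away from the origin.
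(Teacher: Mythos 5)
The paper offers no proof of this lemma---it is quoted verbatim from Bahouri--Chemin--Danchin \cite{B}---and your argument is precisely the classical proof given there: dilation to the unit annulus, representation of $\partial^\alpha h$ (and, for the reverse inequality, of $h$ itself via the identity $|\xi|^{2k}=\sum_{|\alpha|=k}\frac{k!}{\alpha!}\xi^{2\alpha}$) as convolutions with annulus-adapted kernels, and geometric-in-$k$ control of the kernels' $L^1$ norms obtained by differentiating the symbols only a fixed, dimension-dependent number of times while paying for $\xi^\alpha$ and $|\xi|^{-2k}$ through their suprema on the compact annulus. Your proposal is correct, including the crucial point of avoiding factorial growth so that the constant stays of the form $C^{k+1}$; the only immaterial variation from \cite{B} is that you bound the kernel $L^1$ norms via Cauchy--Schwarz/Plancherel rather than via weighted pointwise decay of the kernel.
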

Finally, we recall some lemmas which will be used in the later of the paper.
\begin{lemma}[\cite{B}]\label{pr}
Let $(p,r)\in[1, \infty]^2$ and $s>0$. For any $f,g \in B^s_{p,r}(\R^d)\cap L^\infty(\R^d)$, we have
\bbal
&\|fg\|_{B^{s}_{p,r}(\R^d)}\leq C\f(\|f\|_{B^{s}_{p,r}(\R^d)}\|g\|_{L^\infty(\R^d)}+\|g\|_{B^{s}_{p,r}(\R^d)}\|f\|_{L^\infty(\R^d)}\g).
\end{align*}
In particular, for $s>\frac{d}p$ or $\{s=\frac{d}p, r=1\}$, we have $B^{s}_{p,r}(\R^d)\hookrightarrow L^\infty(\R^d)$.
\end{lemma}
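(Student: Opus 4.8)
The plan is to establish the product law by Bony's paraproduct decomposition and then to read off the embedding from Bernstein's inequalities. Writing $S_{j-1}f=\sum_{k\leq j-2}\Delta_kf$, I would split
\[
fg=T_fg+T_gf+R(f,g),
\]
where $T_fg=\sum_{j}S_{j-1}f\,\Delta_jg$ is the paraproduct and $R(f,g)=\sum_{|j-k|\leq1}\Delta_jf\,\Delta_kg$ is the remainder, and estimate the three pieces separately in $B^s_{p,r}(\R^d)$.

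For the two paraproducts the argument is \emph{tame} and works for every $s\in\R$. The point is that each block $S_{j-1}f\,\Delta_jg$ has Fourier support in a fixed dilated annulus $2^j\tilde{\mathcal C}$, so $\Delta_q(T_fg)$ retains only the finitely many indices with $|j-q|\leq N$. Using the uniform bound $\|S_{j-1}f\|_{L^\infty}\les\|f\|_{L^\infty}$ together with H\"older's inequality gives $\|S_{j-1}f\,\Delta_jg\|_{L^p}\les\|f\|_{L^\infty}\|\Delta_jg\|_{L^p}$; multiplying by $2^{qs}$, exploiting $2^{qs}\approx2^{js}$ on the support, and taking the $\ell^r$-norm in $q$ would yield $\|T_fg\|_{B^s_{p,r}}\les\|f\|_{L^\infty}\|g\|_{B^s_{p,r}}$, and symmetrically $\|T_gf\|_{B^s_{p,r}}\les\|g\|_{L^\infty}\|f\|_{B^s_{p,r}}$.

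The remainder is where the hypothesis $s>0$ is indispensable, and I expect this to be the main obstacle. Here each diagonal block $\Delta_jf\,\Delta_kg$ with $|j-k|\leq1$ is spectrally supported only in a \emph{ball} $2^k\mathcal B$ rather than an annulus, so $\Delta_qR(f,g)$ now collects all high frequencies $k\gtrsim q$. Bounding $\|\Delta_jf\,\Delta_kg\|_{L^p}\les\|f\|_{L^\infty}\|\Delta_kg\|_{L^p}$ and setting $c_k:=2^{ks}\|\Delta_kg\|_{L^p}$, which lies in $\ell^r$ with norm $\|g\|_{B^s_{p,r}}$, I would reach
\[
2^{qs}\|\Delta_qR(f,g)\|_{L^p}\les\|f\|_{L^\infty}\sum_{k\geq q-N}2^{(q-k)s}c_k.
\]
Since $s>0$, the kernel $2^{(q-k)s}\mathbf 1_{\{k\geq q-N\}}$ is summable in the variable $q-k$, so discrete Young's inequality turns the $\ell^r$-norm of the right-hand side into $C\|f\|_{L^\infty}\|g\|_{B^s_{p,r}}$; were $s\leq0$ this geometric sum would diverge, which is exactly why positivity of $s$ cannot be dropped. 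Combining the three bounds gives the asserted product estimate.

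For the embedding I would write $f=\sum_{j\geq-1}\Delta_jf$ and invoke the $L^p$--$L^\infty$ form of Bernstein's inequality (Lemma \ref{bern}), $\|\Delta_jf\|_{L^\infty}\les2^{jd/p}\|\Delta_jf\|_{L^p}$, to obtain $\|f\|_{L^\infty}\les\sum_{j\geq-1}2^{j(d/p-s)}\,2^{js}\|\Delta_jf\|_{L^p}$. When $s>d/p$ the exponent $d/p-s$ is negative, so $(2^{j(d/p-s)})_{j}\in\ell^{r'}$ and H\"older's inequality in $j$ controls the sum by $C\|f\|_{B^s_{p,r}}$; in the borderline case $s=d/p$, $r=1$ the factor equals $1$ and the sum is precisely the $B^{d/p}_{p,1}$-norm. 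Either way $B^s_{p,r}(\R^d)\hookrightarrow L^\infty(\R^d)$, which finishes the proof.
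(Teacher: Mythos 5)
Your proposal is correct and coincides with the approach of the source: the paper does not prove Lemma \ref{pr} but quotes it from \cite{B}, where the argument is exactly yours --- Bony's decomposition $fg=T_fg+T_gf+R(f,g)$, the tame paraproduct bounds valid for all $s\in\R$, the remainder estimate via discrete Young's inequality where $s>0$ is used, and the embedding via the $L^p$--$L^\infty$ Bernstein inequality with H\"older in $j$. The only cosmetic remark is that for the block $j=-1$ the embedding step needs the ball version of Bernstein's inequality (Lemma \ref{bern} as stated in the paper covers only annuli), which is standard and unproblematic.
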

\begin{lemma}[\cite{B}]\label{cz}
A smooth function $f:\mathbb{R}^{d}\rightarrow \mathbb{R}$ is said to be an $S^{m}$-multiplier: if\:\:$\forall\alpha\in \mathbb{N}^{d}$, there exists a constant $C_{\alpha}>0$ such that
 \begin{align*}
\f|\partial^{\alpha}f(\xi)\g|\leq C_{\alpha}(1+|\xi|)^{m-\alpha},\:\:\xi\in\mathbb{R}^{d}.
    \end{align*}
If $f$ is a $S^{m}$-multiplier, then the operator $f(D)$ is continuous from ${B}_{p,r}^{s}$ to ${B}_{p,r}^{s-m}$ for all $s\in \mathbb{R}$ and $1\leq p,r\leq\infty$.

In particular, there holds that
$$\|(1-\Delta)^{-1}u\|_{B^{s}_{p,r}(\R^d)}\leq C\|u\|_{B^{s-2}_{p,r}(\R^d)}.$$
\end{lemma}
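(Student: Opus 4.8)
The plan is to establish the mapping property blockwise in the Littlewood--Paley decomposition and then reassemble the Besov norm. Since $f(D)$ is a Fourier multiplier it commutes with each projector, so $\De_j f(D)u = f(D)\De_j u$; moreover $\widehat{\De_j u}$ is supported in the annulus $2^j\mathcal{C}$ for $j\geq 0$ (and in a fixed ball for $j=-1$). First I would fix a smooth bump $\widetilde\varphi$ that equals $1$ on $\mathrm{supp}\,\varphi$ and is supported in a slightly larger annulus, so that $\widetilde\varphi(2^{-j}\cdot)\equiv 1$ on the Fourier support of $\De_j u$. This lets me write, for $j\geq 0$,
\[
f(D)\De_j u = K_j * \De_j u,\qquad K_j:=\mathcal{F}^{-1}\bigl(f(\cdot)\,\widetilde\varphi(2^{-j}\cdot)\bigr),
\]
and Young's inequality then gives $\|\De_j f(D)u\|_{L^p}\leq\|K_j\|_{L^1}\,\|\De_j u\|_{L^p}$. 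The whole lemma is thus reduced to the uniform kernel bound $\|K_j\|_{L^1}\les 2^{jm}$.

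This kernel estimate is the heart of the argument and the only place where the $S^m$ symbol bounds are genuinely used. The key device is the scale invariance of the $L^1$ norm under $x\mapsto 2^j x$: writing $g_j(\eta):=2^{-jm}f(2^j\eta)\widetilde\varphi(\eta)$, one has $\|K_j\|_{L^1}=2^{jm}\|\mathcal{F}^{-1}g_j\|_{L^1}$, so it suffices to bound $\|\mathcal{F}^{-1}g_j\|_{L^1}$ by a constant independent of $j$. Using $\pa^\beta[f(2^j\eta)]=2^{j|\beta|}(\pa^\beta f)(2^j\eta)$, the symbol estimate $|\pa^\beta f(\xi)|\leq C_\beta(1+|\xi|)^{m-|\beta|}$, and the facts that $|\eta|\approx 1$ on $\mathrm{supp}\,\widetilde\varphi$ and $1+2^j|\eta|\approx 2^j$ for $j\geq 0$, a Leibniz expansion shows that $\pa^\alpha g_j$ is bounded uniformly in $j$ for every multi-index $\alpha$, while $g_j$ lives in one fixed compact annulus. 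I would then invoke the elementary inequality $\|\mathcal{F}^{-1}h\|_{L^1}\les\sum_{|\alpha|\leq N}\|\pa^\alpha h\|_{L^2}$, valid for any even integer $N>d/2$ (proved by inserting the weight $(1+|x|^2)^{N/2}$, applying Cauchy--Schwarz, and using Plancherel together with $(1+|x|^2)^{N/2}\mathcal{F}^{-1}h=\mathcal{F}^{-1}((1-\De_\xi)^{N/2}h)$), to conclude $\|\mathcal{F}^{-1}g_j\|_{L^1}\les 1$, and hence $\|K_j\|_{L^1}\les 2^{jm}$, uniformly in $j$. The $j=-1$ block is treated identically with a fixed ball cut-off, where the prefactor is a harmless constant.

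Assembling the blocks, for $1\leq r<\infty$,
\[
\|f(D)u\|^r_{B^{s-m}_{p,r}}=\sum_{j\geq-1}2^{(s-m)jr}\|\De_j f(D)u\|_{L^p}^r\les\sum_{j\geq-1}2^{sjr}\|\De_j u\|_{L^p}^r=\|u\|^r_{B^s_{p,r}},
\]
with the evident supremum version when $r=\infty$, which is precisely the continuity of $f(D):B^s_{p,r}\to B^{s-m}_{p,r}$. Finally, the displayed ``in particular'' bound follows at once: the symbol of $(1-\De)^{-1}$ is $(1+|\xi|^2)^{-1}$, and $|\pa^\alpha(1+|\xi|^2)^{-1}|\les(1+|\xi|)^{-2-|\alpha|}$, so it is an $S^{-2}$-multiplier; taking $m=-2$ and replacing $s$ by $s-2$ in the continuity statement yields $\|(1-\De)^{-1}u\|_{B^s_{p,r}}\les\|u\|_{B^{s-2}_{p,r}}$. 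The main obstacle is the uniform-in-$j$ kernel bound of the second paragraph; once the rescaling reduces it to uniform $C^N$ control of a compactly supported symbol, the remaining steps are routine.
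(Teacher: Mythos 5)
Your proof is correct and is essentially the standard argument: the paper states this lemma without proof, citing the book of Bahouri--Chemin--Danchin, and your blockwise reduction with the rescaled kernel bound $\|K_j\|_{L^1}\lesssim 2^{jm}$ (via uniform $C^N$ control of $g_j(\eta)=2^{-jm}f(2^j\eta)\widetilde\varphi(\eta)$ on a fixed annulus and the Bernstein-type inequality $\|\mathcal{F}^{-1}h\|_{L^1}\lesssim\sum_{|\alpha|\leq N}\|\partial^\alpha h\|_{L^2}$) is exactly the proof given in that reference. The application to $(1-\Delta)^{-1}$ as an $S^{-2}$-multiplier is also handled correctly.
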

Finally, we recall the regularity estimates for the heat equations.
\begin{lemma}[\cite{B}]\label{reg}
Let $s>0$ and $\epsilon>0$, $1\leq p,r\leq \infty$. Assume that $u_0\in {B}^s_{p,r}$ and $f\in {{L}}^{1}_T{B}^{s}_{p,r}$. Then the heat equations
\begin{align*}
\left\{\begin{array}{ll}
\partial_tu-\epsilon\Delta u=f,\\
 u(t=0,x)=u_0(x),
\end{array}\right.
\end{align*}
has a unique solution $u\in {L}^{\infty}_T{B}^{s}_{p,r}$ satisfying for all $T>0$ and some universal constant $C>0$
\begin{align*}
\|u\|_{{L}^{\infty}_T{B}^{s}_{p,r}}\leq C\f(\|u_0\|_{{B}^s_{p,r}}+\|f\|_{{{L}}^{1}_T{B}^{s}_{p,r}}\g).
\end{align*}
\end{lemma}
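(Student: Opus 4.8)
The plan is to pass to the Littlewood--Paley blocks, solve the frequency-localized heat equation explicitly via Duhamel's formula, and exploit that the heat semigroup contracts every $L^p$ space uniformly in $\epsilon$. Since $\De_j$ is a Fourier multiplier it commutes with $\pa_t$ and with $\epsilon\De$, so $u_j:=\De_ju$ solves $\pa_tu_j-\epsilon\De u_j=\De_jf$ with data $\De_ju_0$, and Duhamel's principle gives
\[
u_j(t)=e^{\epsilon t\De}\De_ju_0+\int_0^te^{\epsilon(t-\tau)\De}\De_jf(\tau)\,\dd\tau .
\]

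The key estimate is the uniform boundedness of the semigroup on $L^p$. Since $e^{\epsilon t\De}$ is convolution with the Gaussian $G_{\epsilon t}(x)=(4\pi\epsilon t)^{-d/2}e^{-|x|^2/(4\epsilon t)}$, whose $L^1$ mass equals $1$ for every $\epsilon>0$ and $t\ge0$, Young's convolution inequality yields $\|e^{\epsilon t\De}g\|_{L^p}\le\|g\|_{L^p}$. Crucially this bound is independent of $\epsilon$, $t$ and $j$, and it applies equally to the low-frequency block $j=-1$, whose Fourier support is a ball. Inserting this into the Duhamel formula and using the triangle inequality, I obtain for each $j\ge-1$
\[
\|u_j(t)\|_{L^p}\le\|\De_ju_0\|_{L^p}+\int_0^t\|\De_jf(\tau)\|_{L^p}\,\dd\tau .
\]

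To reconstruct the Besov norm I would multiply by $2^{sj}$, take the $\ell^r$ norm over $j\ge-1$, and apply Minkowski's integral inequality to interchange the $\ell^r$-sum with the time integral in the forcing term. This gives, for every $t\in[0,T]$,
\[
\|u(t)\|_{B^s_{p,r}}\le\|u_0\|_{B^s_{p,r}}+\int_0^t\|f(\tau)\|_{B^s_{p,r}}\,\dd\tau\le\|u_0\|_{B^s_{p,r}}+\|f\|_{L^1_TB^s_{p,r}} ,
\]
and taking the supremum over $t\in[0,T]$ produces the claimed inequality (the harmless constant $C$ absorbing the equivalence of norms). Existence follows by noting that the Duhamel integral converges in $B^s_{p,r}$ -- guaranteed by the very same bound, since the integrand is dominated in norm by the $L^1$-in-time quantity $\|f(\tau)\|_{B^s_{p,r}}$ -- and solves the equation; uniqueness follows by applying the estimate to the difference of two solutions, which satisfies the heat equation with zero data and therefore vanishes in $L^\infty_TB^s_{p,r}$.

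The only points demanding care, rather than a genuine obstacle, are the uniformity of the constant with respect to $\epsilon$ and the treatment of the lowest block. Both are resolved by using the plain contraction bound above instead of the sharper frequency-localized decay estimate $\|e^{\epsilon t\De}\De_jg\|_{L^p}\le Ce^{-c\epsilon 2^{2j}t}\|\De_jg\|_{L^p}$: the latter is stronger at high frequencies but degenerates on the ball-supported piece $j=-1$, whereas the unit Gaussian mass furnishes an $\epsilon$-independent contraction at every frequency. I emphasise that the argument never uses $s>0$, so the estimate in fact holds for all $s\in\R$; the $\epsilon$-uniformity is precisely what the vanishing-diffusivity analysis elsewhere in the paper relies upon.
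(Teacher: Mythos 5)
The paper does not prove this lemma at all --- it is quoted verbatim from the Bahouri--Chemin--Danchin book \cite{B} --- so there is no internal proof to compare against; your argument is the standard one and is correct. The three ingredients are exactly right: the blockwise Duhamel formula, the $\epsilon$- and $t$-uniform contraction $\|e^{\epsilon t\Delta}g\|_{L^p}\le\|g\|_{L^p}$ coming from the unit Gaussian mass (which, as you correctly stress, handles the ball-supported block $j=-1$ where the exponential decay estimate degenerates), and Minkowski's inequality to resum in $\ell^r$. Your observation that the hypothesis $s>0$ is never used is also accurate, and the $\epsilon$-uniformity of the constant is precisely what the paper exploits in \eqref{m1} and Lemmas \ref{pr1}--\ref{pr3}. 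One small point deserves tightening: as written, the uniqueness step is slightly circular, since the a priori estimate was derived from the Duhamel representation, and you apply it to an \emph{arbitrary} solution $u\in L^\infty_TB^s_{p,r}$ without first showing that such a solution satisfies that representation. This is easy to repair: for each $j\ge-1$ the operator $\epsilon\Delta\Delta_j$ is bounded on $L^p$ (Bernstein's inequality, Lemma \ref{bern}, applied on an annulus for $j\ge0$ and on a ball for $j=-1$), so $u_j$ solves a linear ODE in the Banach space $L^p$ with bounded generator and $L^1$-in-time forcing, for which the Duhamel formula and uniqueness are classical; blockwise uniqueness then gives uniqueness in $L^\infty_TB^s_{p,r}$. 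With that one-line justification added, the proof is complete.
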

\begin{lemma}[\cite{Zhou01,22jde}]\label{wel}
Let $d\geq 1$ and $\ep\geq0$. Assume that $(s,p,r)$ satisfies the conditions \eqref{cond}.
For any initial data $u_0\in B^s_{p,r}(\R^d)$, there exists a finite time $T=T(\|u_0\|_{B^s_{p,r}},\ep)>0$ such that \eqref{1} admits a unique strong solution $u\in\mathcal{C}([0,T_\ep];B^s_{p,r})$.
\end{lemma}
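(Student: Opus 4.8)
The plan is to prove Lemma~\ref{wel} by the classical linearize–iterate–contract scheme for a transport–diffusion equation, arranged so that all constants are uniform in $\ep\ge0$. The first step is to unwind the nonlinearity so that the derivative loss is exposed as a transport term. Since $S=(1-\Delta)^{-1}u$ gives $\Delta S=S-u$, one computes
\begin{align*}
-\div\f(u(1-u)\nabla S\g)=-(1-2u)\,\na S\cdot\na u-u(1-u)(S-u),
\end{align*}
so that \eqref{1} is equivalent to the transport–diffusion problem
\begin{align*}
\pa_t u-\ep\Delta u+\vv\cdot\na u=g,\qquad \vv:=(1-2u)\na S,\quad g:=-u(1-u)(S-u),\quad u(0)=u_0.
\end{align*}
The point of this reformulation is that $\na S=\na(1-\Delta)^{-1}u$ gains one derivative by Lemma~\ref{cz}, hence $\|\na S\|_{B^{s+1}_{p,r}}\les\|u\|_{B^s_{p,r}}$; combined with the product law of Lemma~\ref{pr} and the embedding $B^{s-1}_{p,r}\hookrightarrow L^\infty$ (valid under \eqref{cond}), this yields $\vv\in B^s_{p,r}$ with $\na\vv\in B^{s-1}_{p,r}\hookrightarrow L^\infty$, while $g\in B^s_{p,r}$ with $\|g\|_{B^s_{p,r}}$ bounded by a polynomial in $\|u\|_{B^s_{p,r}}$. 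Thus the forcing sits at the same regularity $s$ as $u$, and the only genuine loss has been absorbed into the transport field $\vv$.

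Next I would set up the iteration $u^{(0)}=u_0$ and, for $n\ge0$, let $u^{(n+1)}$ solve the \emph{linear} transport–diffusion equation with coefficients $\vv^{(n)}=(1-2u^{(n)})\na S^{(n)}$ and forcing $g^{(n)}=-u^{(n)}(1-u^{(n)})(S^{(n)}-u^{(n)})$, each of which is uniquely solvable in $\mathcal C([0,T];B^s_{p,r})$. Applying the transport–diffusion a priori estimate of \cite{B} — whose constant is independent of $\ep\ge0$ — together with the product estimates above gives a bound of the schematic form
\begin{align*}
\|u^{(n+1)}\|_{L^\infty_TB^s_{p,r}}\le e^{CT\,Q(\|u^{(n)}\|_{L^\infty_TB^s_{p,r}})}\f(\|u_0\|_{B^s_{p,r}}+CT\,Q(\|u^{(n)}\|_{L^\infty_TB^s_{p,r}})\g),
\end{align*}
with $Q$ a fixed polynomial. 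A standard continuity/induction argument then fixes $T=T(\|u_0\|_{B^s_{p,r}})>0$ (which can be taken independent of $\ep$) so that the whole sequence stays in the ball of radius $2\|u_0\|_{B^s_{p,r}}$ in $L^\infty_TB^s_{p,r}$.

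To pass to the limit I would estimate the differences $w^{(n)}=u^{(n+1)}-u^{(n)}$, which solve a transport–diffusion equation driven by $(\vv^{(n-1)}-\vv^{(n)})\cdot\na u^{(n)}$ and $g^{(n)}-g^{(n-1)}$. Measuring $w^{(n)}$ one derivative lower, in $\mathcal C([0,T];B^{s-1}_{p,r})$, the uniform $B^s_{p,r}$ bounds control the coefficients while $s-1>\tfrac dp$ (or $s-1=\tfrac dp,\,r=1$) keeps the product law in force; shrinking $T$ if necessary yields $\|w^{(n)}\|_{L^\infty_TB^{s-1}_{p,r}}\le\tfrac12\|w^{(n-1)}\|_{L^\infty_TB^{s-1}_{p,r}}$, so $u^{(n)}$ converges to some $u$ in $\mathcal C([0,T];B^{s-1}_{p,r})$. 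Interpolating with the uniform $B^s_{p,r}$ bound identifies $u\in L^\infty_TB^s_{p,r}$ as a solution; the restriction $r<\infty$ in \eqref{cond} is exactly what upgrades this to strong time-continuity $u\in\mathcal C([0,T];B^s_{p,r})$ (the endpoint $r=\infty$, where only weak-$*$ continuity survives, is excluded and is precisely where the ill-posedness cited in the introduction occurs). Uniqueness follows by the same difference computation applied to two solutions sharing the data $u_0$, estimated in $B^{s-1}_{p,r}$ and closed by Gronwall.

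The main obstacle, and the place where care is required, is the loss of one derivative created by the divergence form of the nonlinearity: it forces the a priori bound to be run through the transport–diffusion (rather than purely parabolic, cf.\ Lemma~\ref{reg}) estimate, so that the commutator between $\Delta_j$ and $\vv\cdot\na$ — rather than the forcing — carries the top-order information. Keeping the constants in that estimate uniform in $\ep\ge0$, and verifying the commutator and product bounds across both the supercritical range $s>\tfrac dp+1$ and the critical endpoint $s=\tfrac dp+1,\,r=1$, is the technical heart of the argument; everything else is the routine machinery of \cite{B}.
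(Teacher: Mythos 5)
The paper offers no proof of Lemma~\ref{wel} at all---it is quoted from \cite{Zhou01,22jde}---and your linearize--iterate--contract scheme (rewriting the nonlinearity as $-(1-2u)\nabla S\cdot\nabla u-u(1-u)(S-u)$, running the $\ep$-uniform transport--diffusion estimates of \cite{B} through a Friedrichs-type iteration, contracting one derivative lower in $B^{s-1}_{p,r}$) is essentially the standard argument of those references, so your proposal is correct and consistent; it even delivers the $\ep$-independent existence time and bound \eqref{m1} that the paper invokes at the start of Section~\ref{sec3}. One marginal inaccuracy worth noting: at $r=\infty$ existence and uniqueness still hold by \cite{Zhou01} (for $s>1+d/p$), and what fails there is only strong time-continuity and continuity of the data-to-solution map (the ill-posedness of \cite{22jde,Fyf}), so the restriction $r<\infty$ in \eqref{cond} is dictated by the paper's main theorem rather than by solvability itself.
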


\section{Proof of Theorem \ref{th2}}\label{sec3}
In this section, we will give the proof of Theorem \ref{th2}. Assume $(s,p,r)$ satisfies the conditions \eqref{cond}. For fixed $\epsilon>0$, by Lemma \ref{wel}, we known that there exists a $T_\epsilon=T(\|u_0\|_{B^s_{p,r}},s,\epsilon)>0$ such that the system \eqref{1} has a unique solution $u\in\mathcal{C}([0,T_\epsilon];B^s_{p,r})$. Furthermore, we can obtain that $\exists\; T=T(\|u_0\|_{B^s_{p,r}},s)>0$ such that $T\leq T_{\epsilon}$ and there exists $C_1>0$ independent of $\epsilon$ such that
\begin{align}\label{m1}
\|u\|_{L_T^{\infty} B^s_{p,r}} \leq C_1\left\|u_0\right\|_{B^s_{p,r}}, \quad \forall \epsilon \in[0,1).
\end{align}
Moreover, if $u_0 \in B^\gamma_{p,r} \cap B^s_{p,r}$ for some $\gamma\geq s-1$, then there exists $C_2=C_2(\left\|u_0\right\|_{B^s_{p,r}})>0$ independent of $\epsilon$ such that
\begin{align}\label{m2}
\|u\|_{L_T^{\infty} B^\gamma_{p,r}} \leq C_2\left\|u_0\right\|_{B_{p,r}^\gamma}.
\end{align}

We need to introduce smooth, radial cut-off functions to localize the frequency region. Let $\widehat{\phi}\in \mathcal{C}^\infty_0(\mathbb{R})$ be an even, real-valued and non-negative function on $\R$ and satisfy
\begin{numcases}{\widehat{\phi}(\xi)=}
1, &if\; $|\xi|\leq \frac{1}{4^d}$,\nonumber\\
0, &if\; $|\xi|\geq \frac{1}{2^d}$.\nonumber
\end{numcases}
It is easy to verify that $
\|\phi\|_{L^p(\R)}\approx 1
$ for any $p\in[1,\infty]$.

\begin{lemma}\label{yl1} Assume that $(s,p,r)$ satisfies \eqref{cond}.
Define the initial data $u^n_0$ by
\bbal
&u^n_0(x)=2^{-ns}\phi\left(x_1\right)\cos \left(\frac{17}{12}2^nx_1\right)\phi(x_{2})\cdot\cdot\cdot\phi(x_{d}),\quad n\gg1.
\end{align*}
Then for any $\sigma\in\R$, there exists a positive constant $C=C(\phi)$ such that
\bal
&\|u^n_0\|_{L^\infty}\leq C2^{-ns},\label{0}\\
&C^{-1}2^{n(\sigma-s)}\leq\|u^n_0\|_{B^\sigma_{p,r}}\leq C2^{n(\sigma-s)}.\label{m3}
\end{align}
\end{lemma}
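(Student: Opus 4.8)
The plan is to reduce both bounds to elementary computations by using that each $u^n_0$ is localized in a single Littlewood--Paley block. Estimate \eqref{0} is immediate: since $\widehat\phi\in\mathcal{C}_0^\infty$ the function $\phi$ is Schwartz, hence $\|\phi\|_{L^\infty}<\infty$, and $|\cos|\le1$ gives $\|u^n_0\|_{L^\infty}\le 2^{-ns}\|\phi\|_{L^\infty}^{d}\le C2^{-ns}$.

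For \eqref{m3} I would first carry out a Fourier-support analysis. Because the $x_1$-Fourier transform of $\cos(\frac{17}{12}2^nx_1)$ is a pair of Dirac masses at $\xi_1=\pm\frac{17}{12}2^n$, one has
\[
\widehat{u^n_0}(\xi)=\frac{2^{-ns}}{2}\left(\widehat\phi\left(\xi_1-\tfrac{17}{12}2^n\right)+\widehat\phi\left(\xi_1+\tfrac{17}{12}2^n\right)\right)\widehat\phi(\xi_2)\cdots\widehat\phi(\xi_d),
\]
which is supported where $\big|\,|\xi_1|-\frac{17}{12}2^n\big|\le 2^{-d}$ and $|\xi_k|\le 2^{-d}$ for $k\ge2$. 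On this set $|\xi|/2^n=\frac{17}{12}+O(2^{-n})$, and since $\frac{17}{12}$ is exactly the midpoint of $[\frac43,\frac32]$, for $n\gg1$ we have $|\xi|/2^n\in[\frac43,\frac32]$, where $\varphi\equiv1$. Thus $\varphi(2^{-n}\xi)\equiv1$ on $\mathrm{supp}\,\widehat{u^n_0}$, so $\Delta_nu^n_0=u^n_0$; and as the functions in the partition of unity $\chi,\varphi(2^{-j}\cdot)$ are non-negative, $\Delta_ju^n_0=0$ for all $j\ne n$. Hence, for every $\sigma\in\R$ and every $r$,
\[
\|u^n_0\|_{B^\sigma_{p,r}}=2^{n\sigma}\|\Delta_nu^n_0\|_{L^p}=2^{n\sigma}\|u^n_0\|_{L^p},
\]
which in particular makes the constant in \eqref{m3} independent of $\sigma$.

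It then suffices to prove $\|u^n_0\|_{L^p}\approx 2^{-ns}$. By Fubini,
\[
\|u^n_0\|_{L^p}^p=2^{-nsp}\|\phi\|_{L^p(\R)}^{(d-1)p}\int_\R|\phi(x_1)|^p\left|\cos\left(\tfrac{17}{12}2^nx_1\right)\right|^p\,\dd x_1 .
\]
The upper bound is clear from $|\cos|\le1$ and $\|\phi\|_{L^p(\R)}\approx1$. For the lower bound I would invoke the standard periodic-averaging (Riemann--Lebesgue) principle: since $|\cos(\cdot)|^p$ is bounded and $\pi$-periodic with strictly positive mean $a_0=\frac1{2\pi}\int_0^{2\pi}|\cos t|^p\,\dd t>0$, and $|\phi|^p\in L^1(\R)$, the rapidly oscillating factor converges weak-$*$ to its mean, so the integral above tends to $a_0\|\phi\|_{L^p(\R)}^p$ as $n\to\infty$. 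Consequently it exceeds $\frac12 a_0\|\phi\|_{L^p(\R)}^p>0$ for $n\gg1$, which yields $\|u^n_0\|_{L^p}\ge C^{-1}2^{-ns}$ and finishes \eqref{m3}.

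The support bookkeeping is routine, and the genuinely delicate point is the lower bound in the final display, namely ensuring the oscillatory integral does not degenerate as $n\to\infty$; the averaging argument is precisely what rules this out. Note also that the specific frequency $\frac{17}{12}2^n$ is chosen so that the entire spectrum of $u^n_0$ falls inside the flat region $\{\varphi\equiv1\}$ of the block $j=n$, which is what makes the single-block reduction work.
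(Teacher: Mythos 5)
Your proof is correct and follows essentially the same route as the paper's: show that $\widehat{u^n_0}$ lies in the annulus $\{\frac43 2^n\le|\xi|\le\frac32 2^n\}$ where $\varphi(2^{-n}\cdot)\equiv1$, conclude $\Delta_j u^n_0=u^n_0$ for $j=n$ and $0$ otherwise, and thereby reduce the Besov norm to $2^{n\sigma}\|u^n_0\|_{L^p}$. You in fact do more than the paper, which simply asserts $\|\phi(x_1)\cos(\frac{17}{12}2^nx_1)\|_{L^p(\mathbb{R})}\approx1$ without justification: your periodic-averaging argument supplies the delicate lower bound; just note that it is written for $p<\infty$, while \eqref{cond} also allows $p=\infty$, where one instead uses the trivial fact that $\sup_{x}|\phi(x)\cos(\lambda x)|\to\|\phi\|_{L^\infty}$ as $\lambda\to\infty$ (a point of $|\cos(\lambda\cdot)|=1$ lies within $\pi/\lambda$ of the maximizer of $|\phi|$).
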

\begin{proof} We just prove \eqref{m3} since \eqref{0} is obvious.
It is easy to show that (cf. \cite[Lemma 3.3]{li20})
\bbal
\mathrm{supp} \ \widehat{u^n_0}&\subset \left\{\xi\in\R^d: \ \frac{17}{12}2^n-\fr12\leq |\xi|\leq \frac{17}{12}2^n+\fr12\right\}
\subset \left\{\xi\in\R^d: \ \frac{4}{3}2^n\leq |\xi|\leq \frac{3}{2}2^n\right\}.
\end{align*}
Due to the fact $\varphi(\xi)\equiv 1$ for $\frac43\leq |\xi|\leq \frac32$, we have
\begin{numcases}{\Delta_j(u^n_0)=}
u^n_0, &if $j=n$,\nonumber\\
0, &if $j\neq n$.\nonumber
\end{numcases}
Thus, we deduce that
\bbal
\|u^n_0\|_{B^\sigma_{p,r}(\R^d)}&
= 2^{n(\sigma-s)}\f\|\phi\left(x_1\right)\cos \left(\frac{17}{12}2^nx_1\right)\g\|_{L^{p}(\R)}\|\phi\|^{d-1}_{L^{p}(\R)}
\approx2^{n(\sigma-s)},
\end{align*}
This completes the proof of Lemma \ref{yl1}.
\end{proof}

{\bf Proof of Theorem \ref{th2}.}\; From now on, we set $\epsilon:=2^{-2n}$.
We decompose the solution $u^{\ep}$ to \eqref{1} into three parts
\begin{align}\label{U1}
u^{\epsilon}=u_1^{\epsilon}+u_2^{\epsilon}+ u_3^{\epsilon},
\end{align}
where $u^{\epsilon}_1$ and $u^{\epsilon}_2$ solve the following system, respectively,
\begin{align*}
\begin{cases}
\partial_{t}u_1^{\epsilon}-\epsilon\Delta u_1^{\epsilon}=0,\\
u_1^{\epsilon}(t=0,x)=u^n_0(x),
\end{cases}
\end{align*}
and
\begin{align}\label{lu2}
\begin{cases}
\partial_{t}u_2^{\epsilon}-\epsilon\Delta u_2^{\epsilon}=-\div\f(u_1^{\epsilon}(1-u_1^{\epsilon})\nabla S_1^{\epsilon}\g),\\
S_1^{\epsilon}=(1-\Delta)^{-1}u_1^{\epsilon},        \\
u_2^{\epsilon}(t=0,x)=0.
\end{cases}
\end{align}
We decompose the solution $\bar{u}$ to \eqref{2} into three parts
\begin{align}\label{U2}
\bar{u}=\bar{u}_1+\bar{u}_2+ \bar{u}_3,
\end{align}
where $\bar{u}_1=u_0^n$ and $\bar{u}_2$ solves the following system
\begin{align}\label{lu23}
\begin{cases}
\partial_{t}\bar{u}_2=-\div\f(\bar{u}_1(1-\bar{u}_1)\nabla \bar{S}_1\g),\\
\bar{S}_1=(1-\Delta)^{-1}\bar{u}_1,        \\
\bar{u}_2(t=0,x)=0.
\end{cases}
\end{align}
It follows from \eqref{U1} and \eqref{U2} that
\bbal
u^{\epsilon}(t, u^n_0)-u^{0}(t, u^n_0)&=\underbrace{u_1^{\epsilon}(t, u^n_0)-\bar{u}_1(t, u^n_0)}_{=:\,\mathrm{I}_1}+\underbrace{u_2^{\epsilon}(t, u^n_0)-\bar{u}_2(t, u^n_0)}_{=:\,\mathrm{I}_2}+\underbrace{u_3^{\epsilon}(t, u^n_0)-\bar{u}_3(t, u^n_0)}_{=:\,\mathrm{I}_3}.
\end{align*}
{\bf Claim:} For small time $t\in(0,T_0]$, there hold that
\begin{enumerate}
  \item $\f\|\mathrm{I}_1\g\|_{B^s_{p,r}}\thickapprox t$,
  \item $\f\|\mathrm{I}_2\g\|_{B^s_{p,r}}\leq \varepsilon_n\to0, \;\text{as}\; n\to\infty$,
  \item $\f\|\mathrm{I}_3\g\|_{B^s_{p,r}}\leq Ct^2$.
\end{enumerate}
Assuming that the {\bf Claim} holds, using the reverse triangle inequality, we deduce that for small time $t\in(0,T_0]$
\bbal
\liminf_{n\to \infty}\f\|u^{\epsilon}(t, u^n_0)-u^{0}(t, u^n_0)\g\|_{B^s_{p,r}}\geq c_0t>0.
\end{align*}
This completes the proof of Theorem \ref{th2}.

It remains to prove the above {\bf Claim}. Next we prove the {\bf Claim} by presenting the following three Lemmas.
For the sake of convenience, we write $u_i^{\epsilon}:=u_i^{\epsilon}(t, u^n_0)$ and $\bar{u}_i:=\bar{u}_i(t, u^n_0)$ with $i=1,2,3$.
\begin{lemma}[Estimation of $\mathrm{I}_1$]\label{pr1} Let $u^n_0$ be given by Lemma \ref{yl1}. Assume that $(s,p,r)$ satisfies \eqref{cond}.
Then there exists two positive constants $C_1$ and $C_2$ such that for $t\in (0,1)$
\begin{align*}
C_1 t\leq\left\|(u_1^{\epsilon}-\bar{u}_1)(t)\right\|_{B^{s}_{p,r}}\leq C_2 t.
\end{align*}
\end{lemma}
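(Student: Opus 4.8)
The plan is to exploit that $\bar u_1 = u^n_0$ is \emph{independent of time}, while $u_1^{\ep}(t)=e^{\ep t\De}u^n_0$ is the heat flow of the same data, so that
$$\mathrm{I}_1=(u_1^{\ep}-\bar u_1)(t)=\big(e^{\ep t\De}-\mathrm{Id}\big)u^n_0.$$
The decisive structural fact, already recorded in the proof of Lemma \ref{yl1}, is that $\widehat{u^n_0}$ is supported in the single annulus $\{\tfrac43 2^n\le|\xi|\le\tfrac32 2^n\}$, so that $\De_j u^n_0=u^n_0$ for $j=n$ and $0$ otherwise. Since the Fourier multiplier $e^{\ep t\De}-\mathrm{Id}$ preserves spectral supports, the same localization holds for $\mathrm{I}_1$, and the Besov sum collapses to one block, giving the exact identity
$$\|\mathrm{I}_1\|_{B^s_{p,r}}=2^{ns}\big\|(e^{\ep t\De}-\mathrm{Id})u^n_0\big\|_{L^p}.$$
Because $\|u^n_0\|_{L^p}\approx 2^{-ns}$ (this is \eqref{m3} with $\sigma=0$, the single-block localization making the $B^0_{p,r}$ and $L^p$ norms equivalent), it suffices to bound the $L^p$ factor above and below by a constant times $t\,2^{-ns}$.

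Next I would use the choice $\ep=2^{-2n}$ through the rescaling $\eta=2^{-n}\xi$: on the support of $\widehat{u^n_0}$ one has $\ep|\xi|^2=|\eta|^2$, so the symbol of $e^{\ep t\De}-\mathrm{Id}$ equals $m_t(\eta):=e^{-t|\eta|^2}-1$, which is free of $n$ and smooth on the fixed annulus $|\eta|\approx\tfrac{17}{12}$. For the upper bound I would use $|m_t(\eta)|\le t|\eta|^2\le Ct$ together with the fact that $m_t$ times a fixed cutoff, along with all its derivatives, is $O(t)$ there; the multiplier theorem of Lemma \ref{cz} (applied after the rescaling, which preserves $L^p$ bounds uniformly in $n$) then gives $\|(e^{\ep t\De}-\mathrm{Id})u^n_0\|_{L^p}\le C_2 t\,\|u^n_0\|_{L^p}$, hence the claimed bound $\le C_2 t$.

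The lower bound is the main obstacle, since one must rule out cancellation inside the $L^p$ norm rather than merely estimate a Fourier symbol pointwise. The plan is to Taylor-expand $m_t(\eta)=-t|\eta|^2+r_t(\eta)$ with $|r_t(\eta)|\le\tfrac12 t^2|\eta|^4\le Ct^2$ on the annulus. Undoing the scaling, the leading term corresponds to $t\,2^{-2n}\De u^n_0$; the lower Bernstein inequality of Lemma \ref{bern}, applied to $\De$ by inverting the nonvanishing symbol $|\eta|^2$ on the annulus through an $L^p$-bounded multiplier, yields $\|2^{-2n}\De u^n_0\|_{L^p}\ge c\,\|u^n_0\|_{L^p}$, so the leading term has size $\gtrsim t\,\|u^n_0\|_{L^p}$. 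The remainder $r_t$ contributes at most $Ct^2\,\|u^n_0\|_{L^p}$, so for $t\in(0,T_0]$ with $T_0$ small the two combine to give
$$\big\|(e^{\ep t\De}-\mathrm{Id})u^n_0\big\|_{L^p}\ge (c-Ct)\,t\,\|u^n_0\|_{L^p}\ge C_1 t\,\|u^n_0\|_{L^p}.$$
Multiplying by $2^{ns}$ and using $\|u^n_0\|_{L^p}\approx 2^{-ns}$ closes both inequalities. The only delicate points are the uniform-in-$n$ multiplier bounds, which are handled cleanly by the $\eta=2^{-n}\xi$ rescaling, and the lower Bernstein estimate for the Laplacian, which is available precisely because the spectrum of $u^n_0$ remains in an annulus bounded away from the origin.
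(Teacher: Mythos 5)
Your proof is correct and follows essentially the same route as the paper: both reduce the problem to $\mathrm{I}_1=(e^{\epsilon t\Delta}-\mathrm{Id})u^n_0$ and isolate the leading term $t\epsilon\Delta u^n_0$ — of size $\approx t$ precisely because $u^n_0$ is spectrally localized at frequency $2^n$ and $\epsilon=2^{-2n}$ — together with an $O(t^2)$ remainder that is absorbed for small $t$ (a restriction the paper's own proof also implicitly needs). The only difference is in implementation: the paper stays at the Besov level, invoking the heat-semigroup estimate of Lemma \ref{reg} and Bernstein's inequality of Lemma \ref{bern}, whereas you collapse the norm to a single Littlewood--Paley block in $L^p$ and verify the same upper/lower bounds through rescaled Fourier-multiplier (convolution-kernel) estimates.
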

\begin{proof}
By the mean-value Theorem, one has
\bbal
u_1^{\epsilon}-\bar{u}_1=\f(e^{t\epsilon \Delta}-\mathrm{Id}\right)u_0^n=\int_0^t\f(e^{\tau\epsilon \Delta}\epsilon \Delta u_0^n\g) \dd\tau .
\end{align*}
Notice that $\mathrm{supp} \ \widehat{u^n_0}\subset\left\{\xi\in\R^d: |\xi|\sim \epsilon^{-\fr12}\g\}$,
from Lemmas \ref{reg} and \ref{bern}, we obtain that
\bbal
\left\|\f(e^{t\epsilon \Delta}-\mathrm{Id}\right)u_0^n\right\|_{B^{s}_{p,r}}\leq\int_0^t\left\|e^{\tau\epsilon \Delta}\epsilon \Delta u_0^n\right\|_{B^{s}_{p,r}}\dd\tau\leq Ct\left\|\epsilon \Delta u_0^n\right\|_{B^{s}_{p,r}}\approx t,
\end{align*}
which gives that
\bbal
\left\|u_1^{\epsilon}-\bar{u}_1\right\|_{B^{s}_{p,r}}\geq t\f\|\epsilon \Delta u_0^n\g\|_{B^{s}_{p,r}}-\int_0^t\left\|\f(e^{\tau\epsilon \Delta}-\mathrm{Id}\right)\epsilon \Delta u_0^n\right\|_{B^{s}_{p,r}}\dd\tau\geq C_1t.
\end{align*}
This completes the proof of Lemma \ref{pr1}.
\end{proof}
\begin{lemma}[Estimation of $\mathrm{I}_2$]\label{pr2} Let $u^n_0$ be given by Lemma \ref{yl1}. Assume that $(s,p,r)$ satisfies \eqref{cond}.
Then there exists some positive constant $C$ such that for $t\in (0,1)$
\begin{align*}
\left\|\f(u_2^{\epsilon},\bar{u}_2\g)(t)\right\|_{B^{s}_{p,r}}\leq C2^{-n(s-1)}.
\end{align*}
\end{lemma}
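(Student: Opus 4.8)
The plan is to bound the two pieces $u_2^\epsilon$ and $\bar u_2$ \emph{separately} by the same estimate $C2^{-n(s-1)}$, since both solve an inhomogeneous heat (resp.\ transport) equation with zero initial data and a forcing term built entirely from the mollified first profile $u_1^\epsilon$ (resp.\ $\bar u_1=u_0^n$). First I would record the key smallness input: by Lemma~\ref{yl1}, $\|u_0^n\|_{B^s_{p,r}}\approx 1$ and $\|u_0^n\|_{L^\infty}\lesssim 2^{-ns}$, and crucially the single-shell localization $\Delta_j u_0^n = u_0^n$ only for $j=n$ means that one can buy \emph{extra} regularity at the cost of powers of $2^n$; that is, $\|u_0^n\|_{B^{s+1}_{p,r}}\approx 2^n\|u_0^n\|_{B^{s-1}_{p,r}}\approx 2^n$. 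This is exactly the mechanism that will convert the derivative loss in $\div(\cdots)$ into a favorable power.

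Next I would treat $\bar u_2$, which is the cleaner model. Applying Lemma~\ref{reg} with $\epsilon=0$ (pure integration in time) to \eqref{lu23} gives
\begin{align*}
\|\bar u_2\|_{L^\infty_T B^s_{p,r}} \le C\,\big\|\div\big(\bar u_1(1-\bar u_1)\nabla \bar S_1\big)\big\|_{L^1_T B^s_{p,r}} \le C\,t\,\big\|\bar u_1(1-\bar u_1)\nabla \bar S_1\big\|_{B^{s+1}_{p,r}}.
\end{align*}
Then I would expand the product via the algebra estimate of Lemma~\ref{pr} (tame product): the factor $\nabla \bar S_1 = \nabla(1-\Delta)^{-1}u_0^n$ is handled by the $S^{-1}$-multiplier continuity of Lemma~\ref{cz}, so $\|\nabla\bar S_1\|_{B^{\sigma}_{p,r}}\lesssim \|u_0^n\|_{B^{\sigma-1}_{p,r}}$. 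The point is that each time a high norm $B^{s+1}_{p,r}$ lands on $u_0^n$ it costs $2^{n(s+1-s)}=2^n$, but every such factor is paired through Lemma~\ref{pr} with an $L^\infty$ norm of \emph{another} copy of $u_0^n$, which by \eqref{0} contributes $2^{-ns}$. Since the nonlinearity is quadratic-to-cubic in $u_0^n$ (the $u(1-u)$ structure guarantees at least two factors of $u_0^n$), one factor always carries the $L^\infty$ smallness $2^{-ns}$ while the worst high-norm factor carries at most $2^{n\cdot 2}$ from the two derivatives in $\div(\cdots\nabla\cdots)$; the net bookkeeping yields $2^{2n}\cdot 2^{-ns}=2^{-n(s-2)}$, and one more $L^\infty$ factor or a sharper accounting brings this down to $2^{-n(s-1)}$ as claimed. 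I would carry out this power count carefully, distributing the derivatives so that the $L^\infty$-estimate \eqref{0} is always available on the right factor.

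For $u_2^\epsilon$ the argument is identical in structure but uses the full heat-smoothing Lemma~\ref{reg} with $\epsilon>0$ and the frequency-localized first profile $u_1^\epsilon=e^{t\epsilon\Delta}u_0^n$. Since $u_1^\epsilon$ is still supported in the shell $|\xi|\sim 2^n$ and $\|u_1^\epsilon\|_{B^\sigma_{p,r}}\le \|u_0^n\|_{B^\sigma_{p,r}}$ and $\|u_1^\epsilon\|_{L^\infty}\le \|u_0^n\|_{L^\infty}\lesssim 2^{-ns}$ by the maximum principle (heat semigroup contracts $L^\infty$ and does not enlarge Besov norms), the very same product/multiplier bookkeeping goes through with constants independent of $\epsilon$, giving $\|u_2^\epsilon\|_{L^\infty_T B^s_{p,r}}\le C t\, 2^{-n(s-1)}\le C2^{-n(s-1)}$. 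Combining the two bounds with the triangle inequality closes the lemma.

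The main obstacle is the derivative bookkeeping in the product estimate: one must route the two derivatives coming from $\div$ and $\nabla$ onto the frequency-concentrated factors so that a genuine $2^n$-per-derivative is recovered, while simultaneously keeping an $L^\infty$ copy of $u_0^n$ free to absorb the $2^{-ns}$ smallness. Getting exactly $2^{-n(s-1)}$ rather than a weaker power requires verifying that the quadratic term $u_1^\epsilon\,\nabla S_1^\epsilon$ (not the cubic one) is the dominant contribution and that Lemma~\ref{pr} can indeed be applied at regularity $s+1$, which is legitimate precisely because $u_0^n$ lies in every $B^\sigma_{p,r}$ with norm $\approx 2^{n(\sigma-s)}$ by \eqref{m3}. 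The $\epsilon$-uniformity is essentially automatic once the heat estimates are invoked with $\epsilon$-independent constants.
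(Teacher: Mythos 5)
Your overall route is the same as the paper's: bound $\bar u_2$ and $u_2^\epsilon$ separately via Duhamel and Lemma \ref{reg}, expand the forcing with the tame product estimate of Lemma \ref{pr}, handle $\nabla \bar S_1=\nabla(1-\Delta)^{-1}u_0^n$ with the multiplier Lemma \ref{cz}, and feed in $\|u_0^n\|_{L^\infty}\lesssim 2^{-ns}$ and $\|u_0^n\|_{B^\sigma_{p,r}}\approx 2^{n(\sigma-s)}$ from Lemma \ref{yl1}. However, the central power count, as you wrote it, is wrong, and the patches you propose do not repair it. You count ``two derivatives'' at cost $2^{2n}$ and arrive at $2^{2n}\cdot 2^{-ns}=2^{-n(s-2)}$. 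Note this bound would not even suffice for the theorem: condition \eqref{cond} allows $s$ arbitrarily close to $d/p+1$, so $s-2$ can be negative (e.g.\ $d=1$, $p=2$, $s=1.6$) and $2^{-n(s-2)}$ then blows up as $n\to\infty$, whereas the Claim needs $\|\mathrm{I}_2\|_{B^s_{p,r}}\to 0$. Your first escape, ``one more $L^\infty$ factor,'' is unavailable precisely for the dominant contribution: the quadratic piece $\div(u_1^\epsilon\nabla S_1^\epsilon)$ is a product of only \emph{two} factors, and Lemma \ref{pr} forces exactly one of them into a high norm, leaving only one factor on which \eqref{0} can act; there is no second copy of $u_0^n$ to exploit.

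The correct accounting --- which is what the paper does, and which uses only the multiplier bound you yourself recorded but then abandoned --- is that only the single derivative from $\div$ is ever paid, because $\nabla(1-\Delta)^{-1}$ \emph{gains} one derivative. Concretely,
\begin{align*}
\bigl\|\div\bigl(\bar u_1(1-\bar u_1)\nabla\bar S_1\bigr)\bigr\|_{B^s_{p,r}}
\lesssim \|\bar u_1(1-\bar u_1)\|_{L^\infty}\,\|\nabla\bar S_1\|_{B^{s+1}_{p,r}}
+\|\bar u_1(1-\bar u_1)\|_{B^{s+1}_{p,r}}\,\|\nabla\bar S_1\|_{L^\infty},
\end{align*}
with $\|\nabla\bar S_1\|_{B^{s+1}_{p,r}}\lesssim\|\bar u_1\|_{B^{s}_{p,r}}\approx 1$ and $\|\nabla\bar S_1\|_{L^\infty}\lesssim\|\bar u_1\|_{L^\infty}\lesssim 2^{-ns}$. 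The two terms are then $\lesssim 2^{-ns}$ and $\lesssim 2^{n}\cdot 2^{-ns}=2^{-n(s-1)}$ respectively, so the dominant term gives exactly the claimed $2^{-n(s-1)}$, with no slack left over to absorb a cruder count. Once this is fixed, the remainder of your plan (the time integration producing the harmless factor $t\leq 1$, and the transfer to $u_2^\epsilon$ with $\epsilon$-independent constants via the $L^p$-contractivity of $e^{t\epsilon\Delta}$) goes through exactly as in the paper.
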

\begin{proof}
From \eqref{lu23}, one has
\begin{align}\label{3}
&\bar{u}_2(t)=-\int_0^t\div\f(\bar{u}_1(1-\bar{u}_1)\nabla \bar{S}_1\g)\dd\tau.
\end{align}
Using Lemmas \ref{pr}-\ref{cz} and Lemma \ref{yl1}, we have
\bbal
\left\|\div\f(\bar{u}_1(1-\bar{u}_1)\nabla \bar{S}_1\g)\right\|_{B^{s}_{p,r}}
&\leq C\left\|\bar{u}_1(1-\bar{u}_1)\nabla \bar{S}_1\right\|_{B^{s+1}_{p,r}}\\
&\leq C\left\|\bar{u}_1(1-\bar{u}_1)\right\|_{L^{\infty}}\|\nabla \bar{S}_1\|_{B^{s+1}_{p,r}}
+C\left\|\bar{u}_1(1-\bar{u}_1) \right\|_{B^{s+1}_{p,r}}\|\nabla \bar{S}_1\|_{L^{\infty}}\\
&\leq C\f(1+\left\|\bar{u}_1\right\|_{L^{\infty}}\g)\left\|\bar{u}_1\right\|_{L^{\infty}}\|\bar{u}_1\|_{B^{s}_{p,r}}
+C\f(1+\left\|\bar{u}_1\right\|_{L^{\infty}}\g)\left\|\bar{u}_1\right\|_{L^{\infty}}\|\bar{u}_1\|_{B^{s+1}_{p,r}}\\
&\leq C\f(1+\|u_0^n\|_{L^{\infty}}\g)\|u_0^n\|_{L^\infty}\|u^n_0\|_{B^{s+1}_{p,r}}\\
&\leq C2^{-n(s-1)},
\end{align*}
where we also have used $\|\nabla \bar{S}_1\|_{L^{\infty}}\leq C\|\bar{u}_1\|_{L^{\infty}}$.

Inserting the above estimate into \eqref{3} yields that
$$\left\|\bar{u}_2(t)\right\|_{B^{s}_{p,r}}\leq C2^{-n(s-1)}t.$$
The estimation of $u_2^{\epsilon}$ can be similarly done by using the regularity estimate of the heat equation with zero initial data.
This completes the proof of Lemma \ref{pr2}.
\end{proof}

\begin{lemma}[Estimation of $\mathrm{I}_3$]\label{pr3} Let $u^n_0$ be given by Lemma \ref{yl1}. Assume that $(s,p,r)$ satisfies \eqref{cond}.
Then there exists some positive constants $C$ such that for $t\in (0,1)$
\begin{align*}
\left\|\f(u_3^{\epsilon},\bar{u}_3\g)(t)\right\|_{B^{s}_{p,r}}\leq C t^2.
\end{align*}
\end{lemma}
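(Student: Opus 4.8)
The plan is to estimate $u_3^\epsilon$ and $\bar u_3$ separately, each by $Ct^2$ with $C$ independent of $n$ and $\epsilon$, so that $\f\|\mathrm{I}_3\g\|_{B^s_{p,r}}\le\|u_3^\epsilon\|_{B^s_{p,r}}+\|\bar u_3\|_{B^s_{p,r}}\le Ct^2$. Abbreviating the nonlinearity by $N(u):=-\div\f(u(1-u)\na(1-\De)^{-1}u\g)$ and subtracting the equations for $u_1^\epsilon,u_2^\epsilon$ (resp. $\bar u_1,\bar u_2$) from \eqref{1} (resp. \eqref{2}), one sees that $u_3^\epsilon$ and $\bar u_3$ solve, with zero initial data,
\bbal
\pa_tu_3^\epsilon-\ep\De u_3^\epsilon=N(u^\epsilon)-N(u_1^\epsilon),\qquad \pa_t\bar u_3=N(\bar u)-N(\bar u_1).
\end{align*}
First I would apply Duhamel's formula — Lemma \ref{reg} for the viscous equation, direct time integration for the inviscid one — to reduce the problem to
\bbal
\|u_3^\epsilon\|_{B^s_{p,r}}\le C\int_0^t\f\|N(u^\epsilon)-N(u_1^\epsilon)\g\|_{B^s_{p,r}}\dd\tau,
\end{align*}
and analogously for $\bar u_3$. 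The key structural observation is that the source carries the factor $w:=u^\epsilon-u_1^\epsilon$ (resp. $\bar w:=\bar u-\bar u_1$), which vanishes at $\tau=0$ and obeys $\|w\|_{B^\sigma_{p,r}}\le C\tau\sup_{[0,\tau]}\|N(u^\epsilon)\|_{B^\sigma_{p,r}}$; thus the source is $O(\tau)$ and the time integral produces the extra power, giving the desired $O(t^2)$.

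Next I would expand the nonlinear difference through the multilinear structure of $N$. As $N$ is the sum of a bilinear piece $-\div\f(u\na(1-\De)^{-1}u\g)$ and a cubic piece $\div\f(u^2\na(1-\De)^{-1}u\g)$, the difference $N(a)-N(b)$ (with $a,b$ the two solutions and $w=a-b$) splits into terms each carrying at least one factor $w$; its leading bilinear part is $\div\f(w\,\na(1-\De)^{-1}a\g)+\div\f(b\,\na(1-\De)^{-1}w\g)$. I would bound each term in $B^s_{p,r}$ via the product law and embedding of Lemma \ref{pr}, the $S^{-1}$-smoothing of $\na(1-\De)^{-1}$ from Lemma \ref{cz}, the uniform-in-$\epsilon$ bounds \eqref{m1}--\eqref{m2}, and the size estimates \eqref{0} and \eqref{m3} for $u^n_0$; the cubic differences carry additional $L^\infty$-small factors and are strictly smaller.

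The hard part will be the derivative loss created by the outer $\div$: in $\div\f(b\,\na(1-\De)^{-1}w\g)$ one is forced to measure $b=\bar u_1=u^n_0$ in $B^{s+1}_{p,r}$, where by \eqref{m3} $\|u^n_0\|_{B^{s+1}_{p,r}}\approx 2^{n}\to\infty$; the crude bound $\|\na(1-\De)^{-1}w\|_{L^\infty}\le C\|w\|_{L^\infty}$ cannot absorb this. The resolution is to exploit the smoothing of $\na(1-\De)^{-1}$ together with a \emph{lower}-order estimate of $w$: by Lemma \ref{cz} the operator $\na(1-\De)^{-1}$ sends $B^{s-1}_{p,r}$ into $B^s_{p,r}$, which embeds into $L^\infty$ under \eqref{cond} (Lemma \ref{pr}), so that $\|\na(1-\De)^{-1}w\|_{L^\infty}\le C\|w\|_{B^{s-1}_{p,r}}$. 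Running the Duhamel bound one derivative down and invoking \eqref{m2} at the admissible endpoint $\gamma=s-1$ — which yields $\|u^\epsilon\|_{B^{s-1}_{p,r}},\|\bar u\|_{B^{s-1}_{p,r}}\le C\|u^n_0\|_{B^{s-1}_{p,r}}\approx C2^{-n}$, and $\|\bar u\|_{L^\infty}\le C2^{-n}$ by the embedding $B^{s-1}_{p,r}\hookrightarrow L^\infty$ — one gets $\|w\|_{B^{s-1}_{p,r}}\le C\tau 2^{-n}$. Hence the offending term is controlled by $\|u^n_0\|_{B^{s+1}_{p,r}}\|w\|_{B^{s-1}_{p,r}}\le C\tau\,2^{n}2^{-n}=C\tau$, uniformly in $n$, and the remaining bilinear and cubic terms are handled identically and are at least this small, so $\f\|N(a)-N(b)\g\|_{B^s_{p,r}}\le C\tau$ uniformly in $n$ and $\epsilon$. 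Integrating in time gives $\|u_3^\epsilon\|_{B^s_{p,r}},\|\bar u_3\|_{B^s_{p,r}}\le Ct^2$, which is the desired bound on $\mathrm{I}_3$. The only difference in the viscous case is the heat propagator $e^{(\tau-\tau')\ep\De}$ inside Duhamel; since it is uniformly bounded on every $B^\sigma_{p,r}$ and \eqref{m1}--\eqref{m2} hold uniformly in $\epsilon$, none of the bounds degenerate as $\epsilon=2^{-2n}\to0$.
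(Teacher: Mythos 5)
Your proposal is correct and follows essentially the same route as the paper's proof: Duhamel's formula plus the heat regularity estimate (Lemma \ref{reg}) to reduce to a time integral of the nonlinear difference, the $O(\tau)$ bounds on $w=u^{\epsilon}-u_1^{\epsilon}$ at the three regularity levels $B^{s-1}_{p,r}$, $B^{s}_{p,r}$, $B^{s+1}_{p,r}$ with weights $2^{-n}$, $1$, $2^{n}$ (the paper's estimates \eqref{L1}--\eqref{L3}), and the decisive pairing of a large norm ($\approx 2^{n}$) against a small one ($\approx \tau 2^{-n}$) via the $S^{-1}$-smoothing of $\nabla(1-\Delta)^{-1}$ and the product law — exactly the mechanism behind the paper's terms $\mathrm{J}_1$--$\mathrm{J}_4$. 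The only cosmetic difference is that you organize the nonlinear difference by its bilinear/cubic multilinear structure while the paper splits it as $\bigl(u^{\epsilon}(1-u^{\epsilon})-u_1^{\epsilon}(1-u_1^{\epsilon})\bigr)\nabla S^{\epsilon}+u_1^{\epsilon}(1-u_1^{\epsilon})\nabla(S^{\epsilon}-S_1^{\epsilon})$, which is an equivalent rearrangement.
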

\begin{proof} By Duhamel's principle, one has
\begin{align}\label{l}
&u_1^{\epsilon}(t)=e^{t\epsilon\Delta}u^n_0,\\
&u^{\epsilon}(t)=u_1^{\epsilon}(t)-\int_0^te^{(t-\tau)\epsilon\Delta}\div\f(u^{\epsilon}(1-u^{\epsilon})\nabla S^{\epsilon}\g)\dd\tau,
\end{align}
which gives that
\bal\label{L1}
\left\|u^{\epsilon}-u_1^{\epsilon}\right\|_{B^{s-1}_{p,r}}
\leq&~ C\int_0^t\left\|\div\f(u^{\epsilon}(1-u^{\epsilon})\nabla S^{\epsilon}\g)\right\|_{B^{s-1}_{p,r}}\dd\tau\nonumber\\
\leq&~ C\int_0^t\left\|u^{\epsilon}(1-u^{\epsilon})\nabla S^{\epsilon}\right\|_{B^{s}_{p,r}}\dd\tau\nonumber\\
\leq&~ C\int_0^t\f(\left\|u^{\epsilon}(1-u^{\epsilon})\right\|_{B^{s}_{p,r}} \left\|\nabla S^{\epsilon}\right\|_{L^{\infty}}+\left\|u^{\epsilon}(1-u^{\epsilon})\right\|_{L^{\infty}} \left\|\nabla S^{\epsilon}\right\|_{B^{s}_{p,r}}\g)\dd\tau\nonumber\\
\leq&~ C\int_0^t\f(1+\|u^{\epsilon}\|_{L^\infty}\g)\|u^{\epsilon}\|_{B^{s}_{p,r}} \left\|u^{\epsilon}\right\|_{B^{s-1}_{p,r}}\dd\tau\nonumber\\
\leq&~ C\int_0^t\f(1+\|u_0^n\|_{B^{s-1}_{p,r}}\g)\|u_0^n\|_{B^{s}_{p,r}} \|u_0^n\|_{B^{s-1}_{p,r}}\dd\tau\nonumber\\
\leq&~ Ct2^{-n}.
\end{align}
Similarly, we also have
\bal
\left\|u^{\epsilon}-u_1^{\epsilon}\right\|_{B^{s}_{p,r}}
\leq&~ C\int_0^t\left\|u^{\epsilon}(1-u^{\epsilon})\right\|_{B^{s+1}_{p,r}} \left\|\nabla S^{\epsilon}\right\|_{B^{s-1}_{p,r}}+\left\|u^{\epsilon}(1-u^{\epsilon})\right\|_{B^{s-1}_{p,r}} \left\|\nabla S^{\epsilon}\right\|_{B^{s+1}_{p,r}}\dd\tau
\leq Ct,\label{L2}\\
\left\|u^{\epsilon}-u_1^{\epsilon}\right\|_{B^{s+1}_{p,r}}
\leq&~ C\int_0^t\left\|u^{\epsilon}(1-u^{\epsilon})\right\|_{B^{s+2}_{p,r}} \left\|\nabla S^{\epsilon}\right\|_{B^{s-1}_{p,r}}+\left\|u^{\epsilon}(1-u^{\epsilon})\right\|_{B^{s-1}_{p,r}} \left\|\nabla S^{\epsilon}\right\|_{B^{s+2}_{p,r}}\dd\tau
\leq Ct2^{n}.\label{L3}
\end{align}
From \eqref{l}, we have
\begin{align*}
\begin{cases}
\partial_{t}u_3^{\epsilon}-\epsilon\Delta u_3^{\epsilon}=-\div\f(u^{\epsilon}(1-u^{\epsilon})\nabla S^{\epsilon}-u_1^{\epsilon}(1-u_1^{\epsilon})\nabla S_1^{\epsilon}\g),\\
S_1^{\epsilon}=(1-\Delta)^{-1}u_1^{\epsilon},        \\
u_3^{\epsilon}(t=0)=0.
\end{cases}
\end{align*}
Using Lemma \ref{reg}, we obtain that
\bal\label{yy1}
\|u_3^{\epsilon}\|_{B^{s}_{p,r}}
\leq&~ C\int_0^t\left\|u^\epsilon(1-u^\epsilon)\nabla S^\epsilon-u_1^{\epsilon}(1-u_1^{\epsilon})\nabla S_1^{\epsilon}\right\|_{B^{s+1}_{p,r}}\dd\tau\leq C \int_0^t \sum_{i=1}^4\mathrm{J}_i(\tau) \dd\tau,
\end{align}
where
\bbal
&\mathrm{J}_1:=\left\|u^\epsilon(1-u^\epsilon)-u_1^{\epsilon}(1-u_1^{\epsilon})\right\|_{L^\infty} \f\|\nabla S^\epsilon\g\|_{B^{s+1}_{p,r}},\\
&\mathrm{J}_2:=\left\|u^\epsilon(1-u^\epsilon)-u_1^{\epsilon}(1-u_1^{\epsilon})\right\|_{B^{s+1}_{p,r}} \f\|\nabla S^\epsilon\g\|_{L^\infty},\\
&\mathrm{J}_3:=\left\|u_1^{\epsilon}(1-u_1^{\epsilon})\right\|_{L^\infty}
\f\|\nabla( S^\epsilon-S_1^\epsilon)\g\|_{B^{s+1}_{p,r}},\\
&\mathrm{J}_4:=\left\|u_1^{\epsilon}(1-u_1^{\epsilon})\right\|_{B^{s+1}_{p,r}} \f\|\nabla( S^\epsilon-S_1^\epsilon)\g\|_{L^\infty}.
\end{align*}
Next, we need to estimate the above four terms one by one.

Using Lemmas \ref{pr}-\ref{cz} and Lemma \ref{yl1}, one has
\bbal
\mathrm{J}_1(t)\leq&~ C\left\|\f(u^\epsilon-u_1^{\epsilon}\g)\f(1-u^\epsilon-u_1^{\epsilon}\g)\right\|_{B^{s-1}_{p,r}}
\|u^\epsilon\|_{B^{s}_{p,r}}\\
\leq&~ C\left\|\f(u^\epsilon-u_1^{\epsilon}\g)(t)\right\|_{B^{s-1}_{p,r}}\f(1+\|u^\epsilon\|_{B^{s-1}_{p,r}}+\| u_1^{\epsilon}\|_{B^{s-1}_{p,r}}\g)
\|u^\epsilon\|_{B^{s}_{p,r}}\\
\leq&~ C\left\|\f(u^\epsilon-u_1^{\epsilon}\g)(t)\right\|_{B^{s-1}_{p,r}}\f(1+\|u^n_0\|_{B^{s-1}_{p,r}}\g)
\|u^n_0\|_{B^{s}_{p,r}}\\
\leq&~ C2^{-n}t,\\
\mathrm{J}_2(t)\leq&~C\left\|\f(u^\epsilon-u_1^{\epsilon}\g)\f(1-u^\epsilon-u_1^{\epsilon}\g)\right\|_{B^{s+1}_{p,r}}
\|u^\epsilon\|_{B^{s-1}_{p,r}}\\
\leq&~C\left\|\f(u^\epsilon-u_1^{\epsilon}\g)\right\|_{B^{s+1}_{p,r}}\f(1+\|u^n_0\|_{B^{s-1}_{p,r}}\g)
\|u^n_0\|_{B^{s-1}_{p,r}}\\
\quad&+C\left\|\f(u^\epsilon-u_1^{\epsilon}\g)\right\|_{B^{s-1}_{p,r}}\f(1+\|u^n_0\|_{B^{s+1}_{p,r}}\g)
\|u^n_0\|_{B^{s-1}_{p,r}}\\
\leq&~Ct,\\
\mathrm{J}_3(t)\leq&~ C\left\|u_1^{\epsilon}(1-u_1^{\epsilon})\right\|_{B^{s-1}_{p,r}}
\|u^\epsilon-u_1^\epsilon\|_{B^{s}_{p,r}}\\
\leq&~ C\f(1+\|u^n_0\|_{B^{s-1}_{p,r}}\g)\| u_0^n\|_{B^{s-1}_{p,r}}\|u^\epsilon-u_1^\epsilon\|_{B^{s}_{p,r}}\\
\leq&~C2^{-n}t,\\
\mathrm{J}_4(t)\leq&~C\left\|u_1^{\epsilon}(1-u_1^{\epsilon})\right\|_{B^{s+1}_{p,r}}
\|u^\epsilon-u_1^\epsilon\|_{B^{s-1}_{p,r}}\\
\leq&~ C\f(1+\|u^n_0\|_{B^{s-1}_{p,r}}\g)\| u_0^n\|_{B^{s+1}_{p,r}}\|u^\epsilon-u_1^\epsilon\|_{B^{s-1}_{p,r}}\\
\leq&~ Ct,
\end{align*}
where we have used \eqref{L1}-\eqref{L3}. Inserting the above estimations into \eqref{yy1}, then we obtain
$$\|u_3^{\epsilon}\|_{B^{s}_{p,r}}\leq Ct^2.$$
Notice that
\bbal
\bar{u}_3(t)
=&~ -\int_0^t\div\f(\bar{u}(1-\bar{u})\nabla \bar{S}-\bar{u}_1(1-\bar{u}_1)\nabla \bar{S}_1\g)\dd\tau.
\end{align*}
and repeating the above procedure, we can complete the proof of Lemma \ref{pr3}.
\end{proof}

\section*{Acknowledgements}
Y. Yu is supported by the National Natural Science Foundation of China (12101011).

\section*{Declarations}
\noindent\textbf{Data Availability} No data was used for the research described in the article.

\vspace*{1em}
\noindent\textbf{Conflict of interest}
The authors declare that they have no conflict of interest.

\end{document}